\documentclass[12pt]{article}
\usepackage{amsmath,amssymb,amsthm}
\usepackage{amsfonts, amsmath, amsthm, amssymb}
\usepackage{epsfig}
\usepackage{color}

\title{Signed magic rectangles with three filled cells in each column}
\author {
Abdollah Khodkar, David Leach and Brandi Ellis\\
Department of Mathematics\\
University of West Georgia\\
Carrollton, GA 30118\\
{\tt akhodkar@westga.edu},
{\tt cleach@westga.edu},\\
{\tt bellis5@my.westga.edu}
}

\date{}

\setlength{\textwidth}{114truemm}
\setlength{\textheight}{181truemm}

\setlength{\parindent}{5.0truemm}

\newtheorem{prelem}{{\bf Theorem}}

 \newtheorem{theorem}{Theorem}
\newtheorem{corollary}[theorem]{Corollary}
\newtheorem{lemma}[theorem]{Lemma}
\newtheorem{remark}[theorem]{Remark}
\newtheorem{proposition}[theorem]{Proposition}
\newtheorem{maintheorem}[theorem]{Main Theorem}

\theoremstyle{definition}

\newtheorem{example}[theorem]{Example}

\begin{document}

\maketitle

\begin{abstract}
\noindent A {\em signed magic rectangle} $SMR(m,n;k, s)$ is an $m \times n$ array with entries from $X$, where
$X=\{0,\pm1,\pm2,\ldots, $ $\pm (mk-1)/2\}$ if $mk$ is odd and $X = \{\pm1,\pm2,\ldots,\pm mk/2\}$ if $mk$ is even,
such that precisely $k$ cells in every row and $s$ cells in every column are filled,
every integer from set $X$ appears exactly once in the array and
the sum of each row and of each column is zero. In this paper, we prove that a signed magic rectangle
$SMR(m,n;k, 3)$ exists if and only if  $3\leq m,k\leq n$ and $mk=3n$.
\end{abstract}

\section{Introduction}\label{SEC1}

A {\em magic rectangle} of order $m\times n$ (see \cite{KL2}) with precisely $r$ filled cells in each
row and precisely $s$ filled cells in each column, $MR(m, n; r, s)$,
is an arrangement of the numbers from 0 to $mr-1$ in an $m\times n$
array such that each number occurs exactly once in the array and the
sum of the entries of each row is the same and the sum of entries of
each column is also the same. If $r=n$ or $s=m$, then the array has no empty cells and we denote it by $MR(m,n)$.

The following theorem (see \cite {TH1, TH2, sun}) settles the existence of an $MR(m,n)$.

\begin{theorem}\label{TH:sun}
An $m \times n$ magic rectangle exists if and only if $m \equiv n \pmod 2$, $m + n > 5$, and $m, n > 1$.
\end{theorem}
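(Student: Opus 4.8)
The plan is to prove both directions, with essentially all of the work in the construction. For necessity, counting the filled cells in two ways, $k$ per row over $m$ rows against $3$ per column over $n$ columns, immediately gives $mk=3n$. Since each column carries three filled cells we need at least three rows, so $m\ge 3$, and since each row carries $k$ filled cells we need $k\le n$; under $mk=3n$ these two bounds are equivalent. The companion bounds $k\ge 3$ and $m\le n$ (also equivalent to one another given $mk=3n$) come from excluding the degenerate small values of $k$: if $k=1$ then some row holds a single nonzero entry and cannot have zero sum, while $k=2$ forces every row to be a pair $\{v,-v\}$, and a short counting argument on how such pairs can fill columns of size three rules this out in the oriented range $m\le n$.

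For sufficiency I will exhibit an explicit $SMR(m,n;k,3)$ for every admissible triple $3\le m\le n$, $3\le k\le n$, $mk=3n$. The organizing device is horizontal (column-wise) concatenation: if $A$ is an $SMR(m,w_1;k_1,3)$ and $B$ is an $SMR(m,w_2;k_2,3)$ on disjoint value sets, then placing $B$ to the right of $A$ produces an array with three filled cells in each column (each new column lies wholly inside one block, so its zero column sum is inherited) and $k_1+k_2$ filled cells in each row with zero row sum (a row sum equals the sum of its two block sums, $0+0$). It therefore suffices to (i) decompose the target $k$ into small increments, (ii) realize the corresponding blocks, principally an $SMR(m,m;3,3)$ contributing $\Delta k=3$ together with an $SMR(m,2m/3;2,3)$ contributing $\Delta k=2$ when $3\mid m$, so that the increments $2$ and $3$ reach every admissible $k$, and (iii) choose the value sets of the blocks so that their disjoint union is exactly $X$.

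Step (iii) is where the genuine difficulty lies, and I expect it to be the main obstacle. One cannot simply assign each block a consecutive interval $\{\pm a,\dots,\pm(a+L-1)\}$: a zero-sum triple drawn entirely from a far interval would need its largest entry to equal the sum of the other two in absolute value, which exceeds $a+L-1$ once $a$ is large, so no such triple exists. Hence the value sets must be balanced, each spread across small and large magnitudes (for example along arithmetic-progression classes modulo the number of blocks), and each building block must be constructed directly on its prescribed symmetric set rather than copied from a single template; note in particular that no nonempty column can be made shift-invariant, since the three signs in a column can never sum to zero, so shifting a block by a constant always perturbs some column sum. I therefore anticipate that the core of the argument is a family of explicit constructions of the blocks $SMR(m,m;3,3)$, and $SMR(m,2m/3;2,3)$ in the case $3\mid m$, on prescribed balanced symmetric value sets, split into a handful of cases according to $m\bmod 6$ and the parity of $mk$ (the odd case additionally housing the unique entry $0$). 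Once these blocks are in hand, the concatenation in (i)--(ii) assembles all admissible parameters, with the smallest blocks checked by direct display.
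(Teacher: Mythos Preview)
Your proposal does not address the stated theorem. Theorem~\ref{TH:sun} concerns ordinary magic rectangles $MR(m,n)$ --- full $m\times n$ arrays filled with $0,1,\ldots,mn-1$ so that all row sums agree and all column sums agree --- and the paper does not prove it at all; it is quoted from the classical literature (Harmuth~\cite{TH1,TH2} and Sun~\cite{sun}) and used only as background. What you have written is instead an outline for the paper's Main Theorem on $SMR(m,n;k,3)$: a different object (signed entries, empty cells, zero row and column sums) with a different characterization. So as a proof of Theorem~\ref{TH:sun} the proposal is simply off target.

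Even read charitably as a plan for the Main Theorem, the approach diverges from the paper's and carries the very gap you yourself identify. The paper never concatenates $m$-row blocks horizontally. For $n$ odd it imports ordinary magic rectangles from \cite{KL2} and shifts them via Lemma~\ref{MStoSMR}; for $n$ even it starts from a single $SMR(3,n)$ and repartitions its entry set into zero-sum $k$-subsets, each meeting every column at most once (Theorem~\ref{mainconstruction}), with the delicate work being the explicit production of those $k$-subsets in Lemmas~\ref{Lem:S_1.S_2}--\ref{Lem:S_3}. Your Step~(iii) --- building each block $SMR(m,m;3,3)$ or $SMR(m,2m/3;2,3)$ on a \emph{prescribed} balanced symmetric value set so that the blocks' value sets partition $X$ exactly --- is the whole difficulty, and you leave it as an ``anticipated'' family of constructions rather than supplying one. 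There is no off-the-shelf $SMR(m,m;3,3)$ on an arbitrary symmetric $3m$-set, and your arithmetic-progression suggestion does not come with any verification that zero-sum triples of the required column pattern exist on each residue class. Until that is actually carried out, the concatenation scheme is a reduction to an unproved statement of comparable strength.
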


An {\em integer Heffter array} $H(m, n; s, t)$ is an $m\times n$ array with entries from
$X=\{\pm1,\pm2,$ $\ldots,\pm ms\}$
such that each row contains $s$ filled cells and each column contains $t$ filled cells,
the elements in every row and column sum to 0 in ${\mathbb Z}$, and
for every $x\in A$, either $x$ or $-x$ appears in the array.
The notion of an integer Heffter array $H(m, n; s, t)$ was first defined by Archdeacon in \cite{arc1}.
Heffter arrays can be used for construction of orthogonal cycle systems or embeddings of pairs of cycle systems on surfaces.

Integer Heffter arrays $H(m, n; s, t)$
with $m=n$ represent a type of magic square where each number from the set
$\{1,2,\ldots,$ $ms\}$ is used once up to sign. A Heffter array is {\em tight} if it has no empty cell; that is,
$n=s$ (and necessarily $m = t$). We denote a signed magic rectangle $SMR(m; n; n;m)$
by $SMR(m; n)$.

\begin{theorem}\label{tightHeffter} {\rm\cite{arc2}}
Let $m, n$ be integers at least 3.
There is a tight integer Heffter array $H(m, n)$ if and only if $mn\equiv 0, 3 \pmod 4$.
\end{theorem}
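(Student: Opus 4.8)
The plan is to prove both implications separately: necessity is a one-line parity computation, while sufficiency requires explicit constructions organized by the parities of $m$ and $n$.

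For necessity, observe that since every row of a tight $H(m,n)$ sums to zero, the sum of all $mn$ entries is zero. Those entries are obtained from the magnitudes $1,2,\ldots,mn$ by choosing one sign for each, so the total equals $\sum_{i=1}^{mn}\varepsilon_i\, i$ with $\varepsilon_i\in\{+1,-1\}$. Flipping any single sign changes this total by the even number $2i$, so every attainable total has the same parity as $\sum_{i=1}^{mn} i=\tfrac{mn(mn+1)}{2}$. A zero total therefore forces $\tfrac{mn(mn+1)}{2}$ to be even, i.e. $mn(mn+1)\equiv 0\pmod 4$; since $mn$ and $mn+1$ are consecutive, this holds exactly when $mn\equiv 0$ or $3\pmod 4$. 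This is consistent with the standing hypothesis $m,n\ge 3$, which is itself necessary because a line with one or two distinct-magnitude signed entries can never sum to zero.

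For sufficiency I would construct an array for each admissible pair, splitting into two regimes: (a) both $m,n$ odd, which under $mn\equiv 3\pmod 4$ means $\{m,n\}\equiv\{1,3\}\pmod 4$, and (b) at least one of $m,n$ even, which under $mn\equiv 0\pmod 4$ means $4\mid mn$. The central reduction is a concatenation lemma. Call an $m\times n'$ fully filled array a \emph{shifted block} if its magnitudes are the consecutive integers $\lambda+1,\ldots,\lambda+mn'$ (each signed once) and every row and column sum is zero. Placing such blocks side by side, $[B_1\mid B_2\mid\cdots]$ with $B_t$ consuming the next $mn_t'$ integers, yields a tight $H(m,\sum_t n_t')$, because each column already sums to zero inside its own block and each row sum is a sum of zeros; vertical stacking grows $m$ analogously. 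Since $\{3,4,5\}$ additively generate every integer $\ge 3$, it suffices to build small base blocks for each admissible residue class of the offset $\lambda$ and then assemble; the quantity that must be tracked through the splicing is the parity of each block's total $mn'\lambda+\tfrac{mn'(mn'+1)}{2}$, which must stay even.

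The even cases in regime (b) are comparatively routine: an even dimension supplies short zero-sum blocks built from arithmetic-progression cancellation (for instance placing $a$, $b$, $-(a+b)$ down a column), which tile cleanly under the concatenation lemma. The \textbf{main obstacle is regime (a)}, where $m$ and $n$ are both odd, so \emph{every} row and column has odd length and there is no pairing of rows or columns to exploit for easy cancellation. Here I would rely on a handful of explicitly verified base arrays (small widths and heights such as the $3\times 5$ and $5\times 5$ patterns) given by closed-form staircase or diagonal formulas, possibly using that a magic rectangle is available in this range by Theorem~\ref{TH:sun} as scaffolding, and then propagate them to all larger odd parameters through the concatenation lemma while maintaining the correct offset parity at each stage. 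Thus the technical heart of the argument is producing the odd base cases and certifying that the parity bookkeeping permits them to be spliced together for all admissible $m,n$.
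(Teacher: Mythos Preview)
The paper does not prove this theorem at all: it is quoted from \cite{arc2} and used only as background, so there is no ``paper's own proof'' to compare your attempt against. What you have written is therefore being judged on its own.

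Your necessity argument is correct and standard: the parity of any attainable signed total $\sum \varepsilon_i i$ is fixed, equal to that of $mn(mn+1)/2$, and this is even exactly when $mn\equiv 0,3\pmod 4$.

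Your sufficiency argument, however, is a plan rather than a proof, and it has genuine gaps. The concatenation idea is sound in principle, but the objects you need---\emph{shifted} $m\times n'$ blocks on the interval $\{\lambda+1,\ldots,\lambda+mn'\}$ with all row and column sums zero---are exactly as hard to build as the original arrays, and their existence depends delicately on $m$, $n'$, and the parity of $\lambda$. You never construct a single one; you only assert that ``a handful of explicitly verified base arrays'' would suffice and that ``parity bookkeeping'' can be maintained. In the actual proof in \cite{arc2} this is precisely where almost all of the work lies: dozens of explicit base patterns are exhibited and the extension arguments are carried out case by case. Your appeal to Theorem~\ref{TH:sun} as ``scaffolding'' is also not usable as stated: a magic rectangle has constant (nonzero) row and column sums over $\{0,\ldots,mn-1\}$, and there is no evident sign assignment that converts it into a Heffter array while preserving both the row and the column zero-sum conditions. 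So the outline is reasonable in spirit, but the technical heart---the base constructions for regime~(a) and the shifted blocks for the splicing---is missing, not merely deferred.
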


A {\em square} integer Heffter array $H(n; k)$ is an integer Heffter array with $m=n$ and $s=t=k$.
In \cite{ADDY,DW} it is proved that

\begin{theorem}\label{Heffterwithemptycells}
There is an integer $H(n; k)$ if and only if $3\leq k\leq n$ and $nk\equiv 0,3 \pmod 4$.
\end{theorem}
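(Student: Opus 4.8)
\noindent\emph{Sketch of a proof.} The ``only if'' direction is an elementary parity argument, while the ``if'' direction requires an explicit, case-heavy construction. For necessity, suppose an $H(n;k)$ exists. Counting filled cells by rows and by columns both give $nk$, so the dimension count imposes nothing; the constraints come from the values. By definition the $nk$ entries have absolute values running exactly through $1,2,\dots,nk$, each once, so no row can contain both a value and its negative, nor the entry $0$. But a row with one filled cell sums to $0$ only if that cell is $0$, and a row with two filled cells sums to $0$ only if it equals $\{a,-a\}$; both are impossible, so $k\ge 3$, and since a row has length $n$ we get $k\le n$. Finally the sum of all $nk$ entries equals the sum of the zero row sums, hence is $0$; writing it as $\sum_{i=1}^{nk}\varepsilon_i i$ with $\varepsilon_i\in\{\pm1\}$ and reducing mod $2$ gives $0\equiv\sum_{i=1}^{nk} i=\tfrac{nk(nk+1)}{2}\pmod 2$, that is $4\mid nk(nk+1)$; as consecutive integers are coprime this forces $nk\equiv 0$ or $3\pmod 4$. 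So the three stated conditions are necessary.

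For sufficiency, assume $3\le k\le n$ and $nk\equiv 0,3\pmod 4$; I would construct an $H(n;k)$ in three stages. First, fix the support: put the $k$ filled cells of row $i$ in columns $i,i+1,\dots,i+k-1$ modulo $n$, i.e.\ on $k$ cyclically consecutive diagonals, so that every row and every column automatically has exactly $k$ filled cells and only the entries remain to be chosen. Second, fill those $k$ diagonals with explicit, essentially periodic patterns --- interleaved arithmetic progressions carrying a prescribed pattern of signs --- engineered so that every length-$k$ window along a row sums to $0$, every length-$k$ window down a column sums to $0$, and the absolute values used run exactly through $1,\dots,nk$. The usable pattern depends on the residues of $n$ and $k$ modulo $4$ (occasionally modulo $8$), following the dichotomy $nk\equiv 0$ versus $nk\equiv 3\pmod 4$: in the first regime one of $n,k$ is even, in the second both are odd, and each regime gets its own family of fillings. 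Third, reduce the infinitely many pairs $(n,k)$ to finitely many genuinely new constructions by a composition lemma --- concatenating a base array on $\{\pm1,\dots,\pm a\}$ with a \emph{shiftable} Heffter array whose absolute values lie in $\{a+1,\dots,a+b\}$ produces an array on $\{\pm1,\dots,\pm(a+b)\}$ of the required shape; the tight subcase $k=n$ is furnished directly by Theorem~\ref{tightHeffter}, and small $n$, together with $k$ within a bounded distance of $n$, are handled individually.

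The hard part is the second and third stages: producing, uniformly across all residue classes of $n$ and $k$ allowed by $nk\equiv 0,3\pmod 4$, diagonal fillings in which \emph{every} length-$k$ window along a row and down a column sums to zero while each of $1,\dots,nk$ is used once up to sign, together with the shiftable building blocks that make the composition go through. This is where essentially all of the case analysis lives; the sporadic cases --- small $n$, or $k$ close to $n$, where the periodic patterns have too little room to close up --- are the most delicate and would be dispatched by ad hoc constructions, using where possible the known tight Heffter arrays of Theorem~\ref{tightHeffter}.
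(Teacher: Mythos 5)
First, note that the paper does not prove Theorem~\ref{Heffterwithemptycells} at all: it is quoted as a known result of \cite{ADDY,DW}, so there is no internal proof to compare your attempt against. Judged on its own, your necessity argument is correct and complete, and it is the standard one: $k\le n$ is forced by the row length; a row with one or two filled cells cannot sum to zero because $0$ is not an entry and at most one of $x,-x$ appears in the array, so $k\ge 3$; and summing all $nk$ entries gives $0=\sum_{i=1}^{nk}\varepsilon_i i\equiv \tfrac{nk(nk+1)}{2}\pmod 2$, whence $4\mid nk(nk+1)$ and $nk\equiv 0,3\pmod 4$. That half of the theorem you have genuinely proved.

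The gap is the entire sufficiency direction. What you have written there is a research plan, not a proof: you correctly name the machinery that \cite{ADDY} and \cite{DW} actually use (cyclically consecutive diagonal supports, periodic fillings by interleaved signed arithmetic progressions split by the residues of $n$ and $k$, composition with shiftable arrays, and Theorem~\ref{tightHeffter} for the tight case), but every step at which the argument could fail is deferred. Specifically, you never exhibit a single filling whose length-$k$ windows sum to zero along both rows and columns while the absolute values exhaust $1,\dots,nk$; you never define or construct the shiftable building blocks; and you acknowledge but do not dispatch the sporadic cases where $n$ is small or $k$ is close to $n$. The assertion that ``each regime gets its own family of fillings'' is exactly the content of the theorem, so asserting it without producing the families is circular. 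As it stands the proposal proves the ``only if'' direction and only gestures at the ``if'' direction; to make it a proof you would either have to carry out the constructions explicitly or, as the paper does, cite \cite{ADDY,DW} for them.
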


We now define signed magic rectangles which are similar to Heffter arrays.
A {\em signed magic rectangle} $SMR(m,n;r, s)$ is an $m \times n$ array with entries from $X$, where
$X=\{0,\pm1,\pm2,\ldots, $ $\pm (mr-1)/2\}$ if $mr$ is odd and $X = \{\pm1,\pm2,\ldots,\pm mr/2\}$ if $mr$ is even,
such that precisely $r$ cells in every row and $s$ cells in every column are filled,
every integer from set $X$ appears exactly once in the array and
the sum of each row and of each column is zero.
By the definition, $mr=ns$, $r\leq n$ and $s \leq m$. If $r=n$ or $s=m$, then the rectangle has no empty cells.
We denote by $SMR(m,n)$ a signed magic rectangle $SMR(m,n;n,m)$.
In the case where $m = n$, we call the array a {\em signed magic square}.
Signed magic squares represent a type of magic square where each number from the set $X$
is used once. If $A$ is a Heffter array $H(m, n; s, t)$, then the array $[A \; -A]$ is a signed magic array $SMA(m,2n;2s,t)$, as noticed in \cite{KSW}, and the array
$\left[\begin{array}{cc}
A& \\
& -A \\
 \end{array}\right]$ is a signed magic array $SMG(2m,2n;s,t)$.

The following two theorems can be found in \cite{KSW}.
\begin{theorem} \label{TH:KSW1}
	An $SMR(m,n)$ exists precisely when $m = n = 1$, or when $m = 2$ and $n \equiv 0, 3 \pmod4$, or when $n = 2$ and $m \equiv 0, 3 \pmod4$, or when $m, n > 2$.
\end{theorem}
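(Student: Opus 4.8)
The plan is to split the statement into its necessity and sufficiency halves, exploiting the transpose symmetry $SMR(m,n)^{\top} = SMR(n,m)$ to treat the cases $m=2$ and $n=2$ together, and to organize sufficiency by the parity of $mn$. Throughout I would record the automatic facts that the entries of $X$ sum to $0$ and that $|X|=mn$ equals the number of cells, so that no counting obstruction arises and the whole issue is one of arrangement.

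For necessity, if $m=1$ and $n>1$ then every column is a single cell whose sum must be $0$, forcing every entry to be $0$, which is impossible; hence $m=n=1$ is the only all-filled case with a side of length $1$. If $m=2$, then $mn$ is even and each of the $n$ columns consists of two cells summing to $0$, so the two cells of a column form a pair $\{c,-c\}$; since each magnitude in $\{1,\dots,n\}$ occurs exactly once with each sign, the columns partition these magnitudes one per column, and vanishing of the top row sum forces a choice of signs $\varepsilon_j$ with $\sum_{j=1}^n \varepsilon_j\, j = 0$. Such a signed sum exists iff $1+2+\cdots+n=n(n+1)/2$ is even, i.e.\ iff $n\equiv 0,3 \pmod 4$. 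Transposing gives $m\equiv 0,3\pmod 4$ when $n=2$, so every case outside the stated list is impossible.

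For sufficiency I would dispose of the boundary cases first: $m=n=1$ is the single cell $0$, and for $m=2$ with $n\equiv 0,3\pmod 4$ (and its transpose) the necessity analysis reverses, since $n(n+1)/2$ even yields a subset of $\{1,\dots,n\}$ summing to half the total, hence signs $\varepsilon_j$ and the two-row array with columns $(\varepsilon_j j,\,-\varepsilon_j j)$. For the bulk range $m,n\ge 3$ I would first treat odd $mn$, so $m,n$ are both odd: then $m\equiv n\pmod 2$ and $m+n\ge 6>5$, so a magic rectangle $MR(m,n)$ exists by Theorem~\ref{TH:sun}, and subtracting the integer $(mn-1)/2$ from every entry sends $\{0,\dots,mn-1\}$ onto $\{0,\pm1,\dots,\pm(mn-1)/2\}$ while turning the constant sums into $0$ (each row loses $n\cdot(mn-1)/2$ from a row sum of $n(mn-1)/2$, and likewise for columns), producing an $SMR(m,n)$ directly.

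The main obstacle is even $mn$ with $m,n\ge 3$, where the magic-rectangle shift fails because $(mn-1)/2$ is not an integer, and indeed any affine relabelling of $\{0,\dots,mn-1\}$ maps onto the odd values rather than the full set $\{\pm1,\dots,\pm mn/2\}$. Here I would argue by explicit base constructions together with a size-increasing operation. The key operation appends a balanced pair of columns carrying the next $m$ consecutive magnitudes $A{+}1,\dots,A{+}m$, where $A$ is the current largest magnitude: placing $\pm(A{+}i)$ in the two new cells of row $i$ keeps every row sum at $0$, and a choice of signs with $\sum_i \varepsilon_i (A{+}i)=0$ makes both new column sums vanish, which is possible exactly when $mA+m(m{+}1)/2$ is even; the transposed operation appends a balanced pair of rows. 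Since these steps change $(m,n)$ by $(0,2)$ or $(2,0)$, I would check that they connect a small set of explicit seeds — e.g.\ the $SMR(3,4)$ with rows $(6,-6,3,-3),(-5,4,-4,5),(-1,2,1,-2)$, together with one seed in each relevant parity class — to every admissible $(m,n)$ in the even range, splitting into the subcases ``exactly one of $m,n$ even'' and ``both even'' and using transposition to halve the work. The delicate point, and the part demanding the most care, is guaranteeing at each stage that $mA+m(m{+}1)/2\equiv 0\pmod 2$ so that the append step is legal; controlling this divisibility along the induction, and choosing seeds whose running maximum $A$ has the right parity, is where the real work of the even case lies.
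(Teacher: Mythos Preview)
The paper does not prove this theorem; it simply quotes it from \cite{KSW}, so there is no in-paper argument to compare against.  That said, your proposal has a genuine gap in the even case.

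Your append-two-columns step rests on the claim that a run of $m$ consecutive integers $A{+}1,\dots,A{+}m$ can be signed to sum to zero \emph{exactly when} $mA+m(m{+}1)/2$ is even.  This is false.  For $m=3$ and $A=6$ the values are $\{7,8,9\}$ with even total $24$, yet no choice of signs gives $0$ (no subset sums to $12$).  The same phenomenon recurs for $m=5$: the block $\{6,7,8,9,10\}$ has even total $40$ but no subset sums to $20$, since every pair sums to at most $19$ and every triple to at least $21$.  So the parity condition you isolate is necessary but not sufficient, and precisely the small odd values of $m$ (and of $n$, for the transposed step) that you need to grow from your seed $SMR(3,4)$ are the ones where it fails.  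In particular, there is no way within your scheme to pass from $SMR(3,4)$ to $SMR(3,6)$: appending columns requires signing $\{7,8,9\}$ to zero, which is impossible, and appending rows changes $m$.

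Your necessity argument and the odd-$mn$ reduction via Theorem~\ref{TH:sun} are fine, and the $2\times n$ case is handled correctly.  But the inductive engine for even $mn$ with $m,n\ge 3$ does not work as stated; you would need either a different growth step (for instance one that adds four columns at a time, where signing four consecutive integers to zero is always possible via $a-(a{+}1)-(a{+}2)+(a{+}3)=0$, together with separate seeds for each residue of $n\bmod 4$) or an entirely different construction for the $3\times n$ and $n\times 3$ families, which is in fact what \cite{KSW} and Lemma~\ref{3xeven} of the present paper provide directly.
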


In \cite{KSW} the notation $SMS(n;k)$ is used for
a signed magic square with $k$ filled cells in each row and
$k$ filled cells in each column.

\begin{theorem}\label{TH:KSW2}
	There exists an $SMS(n;k)$ precisely when $n=k = 1$ or $3\leq k\leq n$.
\end{theorem}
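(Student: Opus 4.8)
The plan is to prove necessity and sufficiency separately, writing $SMS(n;k)$ for the $n\times n$ array with $k$ filled cells in each row and each column and value set $X$ (with $0$ present exactly when $nk$ is odd).

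For necessity, suppose an $SMS(n;k)$ exists. Since each of the $n$ rows contains $k$ filled cells chosen among the $n$ columns, we must have $k\le n$. It remains to exclude $k=1$ and $k=2$ when $n>1$. If $k=1$, then the unique filled entry in each row equals that row's sum, which is $0$; thus every filled entry would be $0$, contradicting that the $n>1$ distinct values of $X$ each occur once. If $k=2$, consider the bipartite graph on rows and columns whose edges are the filled cells; it is $2$-regular and hence a disjoint union of cycles of even length at least $4$. Two filled cells sharing a row (resp. column) must sum to $0$, so along any such cycle the entries alternate between $a$ and $-a$ for a fixed magnitude $a$; consequently that magnitude repeats, contradicting the requirement that each element of $X$ appear exactly once. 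Finally, $n=k=1$ is realized by the single cell containing $0$, so the stated conditions are necessary.

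For sufficiency we must construct an $SMS(n;k)$ for every pair with $3\le k\le n$. First I would dispose of the fully filled case $k=n$: here $SMS(n;n)=SMR(n,n)$, which exists for all $n\ge 3$ by Theorem \ref{TH:KSW1}. For $3\le k<n$ I would fix the support to be a circulant, filling the cells $\{(i,\,i+j \bmod n): 0\le i<n,\ j\in D\}$ for a fixed set $D$ of $k$ shifts; this automatically places exactly $k$ filled cells in each row and each column. The task then reduces to assigning the elements of $X$ to these $nk$ cells so that every row and every column sums to $0$, equivalently to producing $n$ zero-sum $k$-subsets of $X$, compatibly arranged around the diagonals, that partition $X$. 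To realize such an assignment I would proceed by composition wherever possible and by direct construction otherwise. The composition tool is the block-diagonal observation from the introduction: if $A$ is a square integer Heffter array $H(n';k)$, then $\left[\begin{array}{cc} A & \\ & -A \end{array}\right]$ is a $2n'\times 2n'$ array with $k$ filled cells per row and column whose entries are exactly $\{\pm1,\dots,\pm n'k\}$, i.e.\ an $SMS(2n';k)$; combined with Theorem \ref{Heffterwithemptycells} this yields all even orders meeting the Heffter congruence $n'k\equiv 0,3\pmod 4$. The remaining pairs --- in particular odd orders $n$, and even orders outside the Heffter range --- I would settle by explicit diagonal constructions, treating separately the cases according to the parities of $n$ and $k$ (and hence of $nk$).

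I expect the main obstacle to be producing a single value assignment that works uniformly across all residue classes, and especially the subcase where $nk$ is odd: here $X$ contains the element $0$, which must be placed so that the $k-1$ remaining cells in its row and in its column still pair off into zero-sum pairs, while simultaneously the global partition of $X\setminus\{0\}$ into the other zero-sum $k$-subsets is maintained. Managing this parity bookkeeping, together with the small boundary cases $k\in\{3,4,5\}$ that seed the constructions, is where the real work lies; the circulant support and the composition step above reduce everything else to routine verification that the proposed labels sum to zero along each diagonal.
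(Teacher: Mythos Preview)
The paper does not prove this theorem; it is quoted from \cite{KSW} as a known result, with no argument given. There is therefore nothing in the present paper to compare your attempt against.

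On the merits of your attempt: your necessity argument is correct and complete. The $k=1$ case is immediate, and the bipartite-cycle argument for $k=2$ is clean (since $nk$ is even here, $0\notin X$, so the repeated value $\pm a$ is a genuine repetition).

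Your sufficiency argument, however, is only a plan, and you say so yourself. The block-diagonal Heffter construction $\left[\begin{smallmatrix}A&\\&-A\end{smallmatrix}\right]$ covers only even $n=2n'$ with $n'k\equiv 0,3\pmod 4$ and $k\le n'$; this leaves all odd $n$, all even $n$ with $n'k\equiv 1,2\pmod 4$, and all cases $n/2<k<n$ untouched. For those you promise ``explicit diagonal constructions'' on a circulant support but do not give them, and you explicitly flag the odd-$nk$ case (placing the entry $0$) and the small $k$ cases as ``where the real work lies.'' That is an honest assessment, but it means the sufficiency direction is not proved here; carrying it out in full is essentially the content of \cite{KSW}.
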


The existence of an $SMR(m,n;r,s)$ is an open problem.
In \cite{AE} the authors study the signed magic rectangles with precisely two filled cells in each column. Below is the main theorem in
\cite{AE}.

\begin{theorem}\label{2filled cells}
There exists an $SMR(m,n;r,2)$ if and only if either $m=2$ and $n=r\equiv 0, 3 \pmod{4}$ or $m,r\geq 3$ and $mr=2n$.
\end{theorem}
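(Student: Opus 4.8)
The plan is to translate the array problem into a graph-labeling problem, dispatch the necessity conditions quickly, and then concentrate on the explicit construction. Since counting the filled cells by rows and by columns forces $mr = 2n$, the quantity $mr$ is even, so the entry set is $X = \{\pm 1, \ldots, \pm n\}$ and a column with exactly two filled cells summing to zero must be a pair $\{j, -j\}$. As the $n$ columns use every element of $X$ once, they realize precisely the $n$ magnitudes $1, \ldots, n$, one per column. Hence an $SMR(m,n;r,2)$ is equivalent to the following: form a loopless multigraph $G$ on the $m$ rows in which value $j$ contributes an edge joining the row holding $+j$ to the row holding $-j$; then $G$ is $r$-regular with $n = mr/2$ edges bijectively labeled by $\{1, \ldots, n\}$, and orienting each edge from its $-j$ endpoint to its $+j$ endpoint turns the row-sum-zero condition into: at every vertex the sum of the labels on in-edges equals the sum on out-edges. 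So I would prove the theorem by constructing, for each admissible $(m,n,r)$, such a balanced bijectively labeled $r$-regular loopless multigraph on $m$ vertices.

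For necessity, $mr = 2n$ is the cell count, while $s = 2 \le m$ and $r \le n$ are immediate from the definition. If $m = 2$, then $mr = 2n$ forces $r = n$, so every cell is filled and the array is an $SMR(2,n)$; by Theorem~\ref{TH:KSW1} this exists iff $n \equiv 0, 3 \pmod 4$, giving the first alternative. If $m \ge 3$, I must show $r \ge 3$. The case $r = 1$ is impossible because $mr$ is even, so $0 \notin X$ and a one-cell row cannot sum to zero. The case $r = 2$ forces $n = m$, i.e. a signed magic square $SMS(m;2)$ with two filled cells per line, which is excluded by Theorem~\ref{TH:KSW2} since it requires $3 \le k$. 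This yields the second alternative $m, r \ge 3$.

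For sufficiency the $m = 2$ branch is exactly Theorem~\ref{TH:KSW1}, so the work lies in the branch $m, r \ge 3$ with $mr = 2n$. Here $mr$ is even, so at least one of $m, r$ is even, and I would organize the construction by the parity of $r$, i.e. by whether $G$ is even- or odd-regular. The balancing tool is the family of constant-sum pairs $\{i, n+1-i\}$, each summing to $n+1$: distributing such pairs as matched in-pairs and out-pairs at the vertices automatically enforces the balance condition. When $r$ is even I would take $G$ to be a circulant template on the $m$ rows and fill it by walking the labels $1, \ldots, n$ along a repeating diagonal with an alternating sign rule, so that each row collects $r/2$ balancing pairs. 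When $r$ is odd (forcing $m$ even) the regular multigraph is odd-regular; I would peel off a perfect matching on the $m$ vertices to absorb one label per vertex, reducing to the even case and choosing the matching labels so that the residual even-regular part compensates their contribution. A handful of smallest cases (small $m$ or small $r$) would be supplied by explicit arrays, and larger instances obtained by appending internally balanced blocks of columns, preserving both regularity and the bijective use of $\{1, \ldots, n\}$.

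The main obstacle is the simultaneous satisfaction of three exact constraints in the $m, r \ge 3$ branch: every row must sum to zero, each row must have exactly $r$ filled cells, and the labels must use each of $1, \ldots, n$ once. A generic regular graph admits a balanced orientation of \emph{some} weighting, but forcing the weights to be the consecutive integers $1, \ldots, n$ with no repetition is what makes the problem rigid. The parity bookkeeping—especially the odd-regular case, where a naive diagonal filling leaves a nonzero residue at each vertex—will require the corrective swaps and explicit small-case seeds that form the technical heart of the argument.
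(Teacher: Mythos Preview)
The paper does not prove this theorem: it is quoted as the main result of \cite{AE}, with no argument given here. So there is no in-paper proof to compare your approach against.

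On its own merits, your necessity argument is clean and complete, and reducing the $m=2$ sufficiency branch to Theorem~\ref{TH:KSW1} is correct. Your graph reformulation for the $m,r\ge 3$ branch is also correct and is the natural way to view $s=2$ arrays: each column is a pair $\{j,-j\}$, so the data is an $r$-regular loopless multigraph on the $m$ rows with edges bijectively labelled $1,\dots,n$, and the row-sum condition is exactly the in-label-sum $=$ out-label-sum balance at every vertex under the $-\!\to\!+$ orientation.

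What is missing is the actual construction in that branch. Everything from ``I would take $G$ to be a circulant template'' onward is a plan, not a proof: you never specify the circulant, the diagonal walk, the sign rule, the matching labels in the odd-$r$ peel-off, or the corrective swaps, and you yourself flag the simultaneous exactness constraints as ``the main obstacle'' still to be overcome. The constant-sum-pair heuristic is sound intuition, but it does not by itself force in-sum $=$ out-sum at each vertex; one must exhibit the labelling and orientation and check it. Likewise, in the odd-$r$ case, stripping a perfect matching only helps if you can name the matching labels and then prove the residual even-regular instance is still solvable with the \emph{remaining} consecutive integers, which is a nontrivial compatibility condition. Until those constructions (and the promised small seed cases) are written out and verified, the sufficiency direction for $m,r\ge 3$ is not established.
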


\noindent In this paper, we prove that a signed magic rectangle
$SMR(m,n;$ $k,3)$ exists if and only if $3\leq m,k\leq n$ and $mk=3n$.

Consider an $SMR(m,n;k,3)$. By definition, we have $mk=3n$. So if $n$ is odd, then $m$ and $k$ must be odd.
In addition, the condition $mk=3n$ implies that either $3| k$ or $3| m$.
In Section \ref{SEC 2} we study the existence of an $SMR(m,n;k,3)$ when $n$ is odd and $3| k$.
In Section \ref{SEC 3} we study the existence of an $SMR(m,n;k,3)$ when $n$ is odd and $3| m$.
In Section \ref{SEC 4} we study the existence of an $SMR(m,n;k,3)$ when $n,k$ are even.
In Section \ref{SEC 5} we study the existence of an $SMR(m,n;k,3)$ when $n$ is even and $k$ is odd.

\section{The existence of an $SMR(m,n;k,3)$ when $n$ is odd and $3| k$.}\label{SEC 2}

The following result shows a relationship between an $MR(m,n;r,s)$ and an $SMR(m,n; r,s)$ when  $mr$ is odd.

\begin{lemma}\label{MStoSMR}
	If there exists an $MR(m,n; r,s)$ with $mr$ odd, then there exists an $SMR(m,n;r,s)$.
\end{lemma}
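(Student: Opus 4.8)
The plan is to obtain the signed magic rectangle from the ordinary one by a single affine shift of all filled entries. Since $mr$ is odd, $t:=(mr-1)/2$ is a non-negative integer, and the map $x\mapsto x-t$ is a bijection from $\{0,1,\ldots,mr-1\}$ onto $X=\{0,\pm1,\ldots,\pm t\}$. So if $A$ is an $MR(m,n;r,s)$, I would let $B$ be the array obtained from $A$ by replacing each filled entry $x$ with $x-t$ and leaving the empty cells empty. The filled-cell pattern is untouched, so $B$ still has exactly $r$ filled cells in each row and $s$ in each column, and every element of $X$ occurs exactly once in $B$.

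The only thing left to check is that every row sum and every column sum of $B$ is zero, and this follows by computing the (constant) row and column sums of $A$. The total of all entries of $A$ is $0+1+\cdots+(mr-1)=\tfrac{mr(mr-1)}{2}$; dividing by the number $m$ of rows gives common row sum $\tfrac{r(mr-1)}{2}=rt$, and dividing by the number $n$ of columns and using $mr=ns$ gives common column sum $\tfrac{mr(mr-1)}{2n}=\tfrac{s(mr-1)}{2}=st$. Since a row of $B$ has $r$ filled cells, its sum is $rt-rt=0$; since a column of $B$ has $s$ filled cells, its sum is $st-st=0$. Hence $B$ is an $SMR(m,n;r,s)$.

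There is really no serious obstacle here: the argument is just the observation that shifting a set of consecutive integers symmetrically about its midpoint turns ``constant row/column sums'' into ``zero row/column sums'', together with the elementary count of those sums. The one point to state carefully is the identity $mr=ns$ built into the definition of $MR(m,n;r,s)$, which is what makes the column computation come out right; I would spell that line out explicitly so the reader sees why the column sums vanish and not merely the row sums.
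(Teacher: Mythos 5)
Your proposal is correct and is essentially the same argument as the paper's: both shift every filled entry by $(mr-1)/2$ and then verify that the constant row and column sums of the magic rectangle (computed by dividing the total $\tfrac{mr(mr-1)}{2}$ by $m$ and $n$ respectively, using $mr=ns$) become zero after the shift. Nothing further is needed.
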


\begin{proof}
	
	Let $m, r$ be odd and greater than 3. By assumption, there exists an $MR(m,n; r,s)$, say $A$.
Since $mr=ns$, it follows that $n$ and $s$ are also odd.
	We will construct an $m\times n$ array $B$ as follows. The cell $(r,c)$ of $B$ is filled with $a-(mr-1)/2$ if and only if the cell $(r,c)$ of $A$ contains $a$.
As the entries in $A$ are precisely the integers $0$ through $mr-1$, it follows that the entries in $B$ are precisely the integers $-\frac{mr-1}{2}$ through $\frac{mr-1}{2}$, the required set of integers for an $SMR(m,n;r,s)$. It remains to be shown that $B$ has rows and columns summing to zero.

We know that the column sum of $A$ is $(mr-1)mr/(2n)$. So the column sum of $B$ is
$$\dfrac{(mr-1)mr}{2n}-\dfrac{(mr-1)s}{2}= \dfrac{(mr-1)}{2}(\dfrac{mr}{n}-s)=0.$$
Similarly, the row sum of $B$ is also zero.
	
\end{proof}

\begin{theorem}\cite{KL2} \label{Th.m.km.ks.s}
Let $k,m,s$ be positive integers. Then there exists a magic rectangle $MR(m,km;ks,s)$ if and only if
$m=s=k=1$ or $2\leq s\leq m$ and either $s$ is even or $km$ is odd.
\end{theorem}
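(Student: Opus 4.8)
We sketch the approach. For necessity, observe first that a column contains at most $m$ filled cells, so $s\le m$. If $s=1$, then each column sum equals the single entry of that column; as all column sums are equal and all entries distinct, this forces a $1\times1$ array, i.e.\ $m=k=s=1$. In every remaining case the common column sum equals $s(mks-1)/2$, which must be an integer; this fails unless $s$ is even or $mks$ is odd, and $mks$ is odd precisely when $km$ is odd and $s$ is odd. This yields the stated conditions.

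For sufficiency I would build the array from a full magic rectangle supplied by Theorem~\ref{TH:sun}. Take first the case $s$ odd --- the only case needed in this paper, where $s=3$ throughout. Then $km$ is odd, so $m$ and $k$ are odd, forcing $s\ge3$ and $km\ge3$; hence $s\equiv km\pmod{2}$ and $s+km>5$, and Theorem~\ref{TH:sun} provides a full magic rectangle $R=MR(s,km)$ on $\{0,1,\dots,mks-1\}$. Each of its $km$ columns sums to $s(mks-1)/2$, which is exactly the column sum an $MR(m,km;ks,s)$ must have, and all $s$ of its row sums are equal. I would then \emph{stretch $R$ vertically}: split the $km$ columns into $k$ consecutive blocks of $m$ columns, keep the contents of each column intact, and place the $s$ entries of each column into $s$ of the $m$ rows of a new $m\times km$ array using one fixed $m\times m$ circulant incidence pattern (with $s$ ones per row and column) in each block. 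Automatically every column of the new array has $s$ filled cells with the correct sum, and every row gets exactly $ks$ entries.

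The crux is to assign the entries so that all $m$ row sums equal $ks(mks-1)/2$. Since $m\cdot ks(mks-1)/2=\binom{mks}{2}$ the target is consistent, and since the $m$ columns of each block sum to $m$ times the column sum, it is enough to carry out the placement inside a single $m\times m$ block so that every row sums to the common column-sum value; this is the technical heart of the argument and the step I expect to be hardest, calling for an explicit, somewhat delicate ordering of the entries together with a local exchange argument, while the incidence bookkeeping and the parity analysis are routine. When $s$ is even one argues in the same spirit, but using the complementary pairs $\{x,\,mks-1-x\}$: placing $s/2$ of them in each column gives the required column sum $\tfrac{s}{2}(mks-1)$, and the pairs are distributed over the rows so that each receives $ks$ filled cells; if $km$ is also even this is again a vertical stretch of $MR(s,km)$, if $km$ is odd the blocks are built so that their row-sum vectors add to a constant vector, and the few small cases (notably those with $s=2$) are settled by hand.
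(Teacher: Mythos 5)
This statement is imported from \cite{KL2} and the paper gives no proof of it, so there is no internal argument to compare yours against; I can only assess your proposal on its own terms. Your necessity half is fine: $s\le m$ is forced, $s=1$ collapses to the trivial $1\times 1$ array, and integrality of the common column sum $s(mks-1)/2$ yields exactly ``$s$ even or $km$ odd.''

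The sufficiency half, however, is a plan rather than a proof, and it stalls at precisely the decisive point. Starting from the full rectangle $MR(s,km)$ supplied by Theorem~\ref{TH:sun} and ``stretching'' each column into $s$ of the $m$ rows does make every column sum correct for free, but the entire content of the theorem is then concentrated in the step you defer: redistributing the $ms$ entries of each $m\times m$ block, with column contents fixed, into $m$ rows of $s$ entries each so that every row sums to $s(mks-1)/2$. Knowing the average row sum is right does not produce such an arrangement; what you need is in effect an $MR(m,m;s,s)$ with prescribed column contents, a problem of the same order of difficulty as the theorem itself, and ``a delicate ordering together with a local exchange argument'' is an expression of hope, not a construction. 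The even-$s$ sketch has the same hole: placing the complementary pairs $\{x,\,mks-1-x\}$ inside columns fixes the column sums but says nothing about the row sums, since the two members of a pair necessarily lie in different rows. Also be careful with the claim that the small cases with $s=2$ are ``settled by hand'': for $(k,m,s)=(1,2,2)$ the statement asks for $MR(2,2;2,2)=MR(2,2)$, which does not exist (it is excluded by Theorem~\ref{TH:sun}), so the statement as quoted needs a small-case caveat that neither it nor your argument records. None of this affects the present paper, which only uses the case $s=3$, but as written your sufficiency argument does not establish the theorem.
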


By Lemma \ref{MStoSMR} and Theorem \ref{Th.m.km.ks.s} we obtain the following:

\begin{proposition}\label{nodd3divk}
 Let $k,m,s$ be positive odd integers. Then there exists an $SMR(m,km;ks,s)$ if
$m=s=k=1$ or $3\leq s\leq m$. In particular,
there exists an $SMR(m,km;3k,3)$.
\end{proposition}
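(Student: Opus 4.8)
The proof will follow almost immediately by combining the two results just quoted, so the plan is to verify that the hypotheses of Theorem~\ref{Th.m.km.ks.s} are satisfied and then transport the conclusion through Lemma~\ref{MStoSMR}. First I would observe that the array we are after, $SMR(m,km;ks,s)$, has $n=km$ columns, $r=ks$ filled cells per row, and $s$ filled cells per column, so the product $mr = m\cdot ks = ks\cdot m$, and since $k,m,s$ are all odd, $mr$ is odd. Thus Lemma~\ref{MStoSMR} applies as soon as we produce a magic rectangle $MR(m,km;ks,s)$ with the same parameters.

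Next I would invoke Theorem~\ref{Th.m.km.ks.s} with the same $k,m,s$: it guarantees an $MR(m,km;ks,s)$ precisely when $m=s=k=1$, or when $2\le s\le m$ and ($s$ even or $km$ odd). In our situation $km$ is odd (as a product of odd integers), so the parity clause is automatically met, and the only remaining requirement is $2\le s\le m$. Under the hypothesis $3\le s\le m$ this holds, and under the hypothesis $m=s=k=1$ the first alternative of Theorem~\ref{Th.m.km.ks.s} holds. Hence in either case the desired $MR(m,km;ks,s)$ exists.

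Finally I would apply Lemma~\ref{MStoSMR} to that magic rectangle to obtain the $SMR(m,km;ks,s)$, completing the proof of the first assertion. For the ``in particular'' clause I would simply specialize to $s=3$: then $3\le s\le m$ reduces to $m\ge 3$, and the parameters become $SMR(m,km;3k,3)$, which exists for every odd $m,k\ge 3$ (the degenerate case $m=k=1$ being excluded once $s=3$).

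There is essentially no obstacle here: the content of the proposition is entirely carried by the already-established Lemma~\ref{MStoSMR} and Theorem~\ref{Th.m.km.ks.s}, so the only thing to be careful about is the bookkeeping of which parameter plays the role of $m$, $n$, $r$, and $s$ in each cited statement, and checking that ``$mr$ odd'' in Lemma~\ref{MStoSMR} genuinely follows from $k,m,s$ all being odd. I would make that parameter match explicit to avoid any confusion between the roles of $k$ as a repetition factor here versus $k$ as a cell-count in other theorems of the paper.
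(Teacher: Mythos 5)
Your proposal is correct and is exactly the paper's argument: the authors state the proposition with no written proof beyond the remark that it follows ``by Lemma~\ref{MStoSMR} and Theorem~\ref{Th.m.km.ks.s},'' which is precisely the combination you carry out, including the key observation that $mr=m\cdot ks$ is odd because $k,m,s$ are all odd. Your explicit bookkeeping of the parameter roles and of the degenerate case $m=s=k=1$ only makes explicit what the paper leaves implicit.
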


\section{The existence of an $SMR(m,n;k,3)$ when $n$ is odd and $3| m$.}\label{SEC 3}

\begin{theorem}\label{am.bm.b.a} \cite{KL2}
Let $a, b, c$ be positive integers with $2 \leq a \leq b$. Let
$a, b, c$ be all odd, or let
$a$ and $b$ both be even, $c$ arbitrary, and $(a, b)\neq (2, 2)$.
Then there exists an $MR(ac,bc;b,a)$.
\end{theorem}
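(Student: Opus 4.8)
The plan is to assemble $MR(ac,bc;b,a)$ from an ordinary magic rectangle of order $a\times b$ together with a gadget of size $c$. Observe first that the array must hold the $abc$ numbers $0,1,\dots,abc-1$, one in each of its $ac\cdot b=abc$ filled cells, with common row sum $b(abc-1)/2$ and common column sum $a(abc-1)/2$; the hypotheses on the parities of $a,b,c$ are exactly what makes these two numbers integers. Moreover, since $2\le a\le b$, $a\equiv b\pmod2$ and $(a,b)\ne(2,2)$ we have $a+b>5$, so an ordinary magic rectangle $M=MR(a,b)$ exists by Theorem~\ref{TH:sun}; this $M$ will carry the high-order part of the entries, and a clever arrangement of offsets from $\{0,1,\dots,c-1\}$ will carry the rest.

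Concretely, I would partition the $ac\times bc$ array into an $a\times b$ grid of $c\times c$ blocks, indexed by $(i,j)$ with $1\le i\le a$ and $1\le j\le b$. In block $(i,j)$ fill exactly the $c$ cells on its main diagonal, so that each row of the big array meets exactly $b$ filled cells and each column exactly $a$. Place in those cells the $c$ consecutive integers $cM[i][j],cM[i][j]+1,\dots,cM[i][j]+c-1$, arranged by a permutation $\sigma_{ij}$ of $\mathbb{Z}_c$, with local row $\rho$ of the block receiving $cM[i][j]+\sigma_{ij}(\rho)$; as $M[i][j]$ runs over $0,\dots,ab-1$ this uses each of $0,\dots,abc-1$ once. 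Then the sum of the big-array row in block-row $i$ and local row $\rho$ equals $c\cdot(\text{row sum of }M)+\sum_{j=1}^b\sigma_{ij}(\rho)$, and column sums behave likewise. So the construction works precisely when the $a\times b$ family $\{\sigma_{ij}\}$ of permutations of $\mathbb{Z}_c$ has $\sum_j\sigma_{ij}(\rho)$ independent of $i$ and $\rho$ and $\sum_i\sigma_{ij}(\rho)$ independent of $j$ and $\rho$.

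When $a,b$ are both even this is immediate: let $P_0$ be the identity and $P_1$ the reversal $\rho\mapsto c-1-\rho$ of $\mathbb{Z}_c$, so $P_0(\rho)+P_1(\rho)=c-1$ for all $\rho$, and choose each $\sigma_{ij}\in\{P_0,P_1\}$ following a $0$-$1$ matrix with every row sum $b/2$ and every column sum $a/2$ (such a matrix clearly exists when $a,b$ are even). Then $\sum_j\sigma_{ij}(\rho)=\tfrac12 b(c-1)$ and $\sum_i\sigma_{ij}(\rho)=\tfrac12 a(c-1)$, both constant.

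The case with $a,b,c$ all odd is the crux and, I expect, the bulk of the work. Now $b$ and $a$ are odd, so a grid-row cannot be built from pairs $\{P_0,P_1\}$ alone; the extra ingredient is the set of all $c$ cyclic shifts $Q_t\colon\rho\mapsto(\rho+t)\bmod c$ of $\mathbb{Z}_c$, whose pointwise sum $\sum_{t=0}^{c-1}Q_t(\rho)$ is the constant $\binom c2$. When $c\le b$ one can fill each grid-row with one full shift-set together with $\tfrac12(b-c)$ complementary pairs, and dually for the columns when $c\le a$, then reconcile the two patterns on a single $a\times b$ grid via an underlying Latin-square-type layout. The delicate subcases are those with $c>b$ (hence $c>a$), where one must build $b$, respectively $a$, permutations of $\mathbb{Z}_c$ with constant pointwise sum out of a finer mixture of shift- and reflection-type permutations, with a little extra analysis for the smallest parameters; checking that the row and column requirements can be met together on the same grid is where I expect the main difficulty to lie.
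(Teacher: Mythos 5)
First, a point of comparison: the paper does not prove this statement at all --- it is quoted verbatim from \cite{KL2} --- so there is no internal proof to measure your argument against. On its own terms, your reduction is sound: splitting the $ac\times bc$ array into an $a\times b$ grid of $c\times c$ blocks, filling each block's diagonal with $cM[i][j]+\sigma_{ij}(\rho)$ for an $MR(a,b)$ $M$ (which Theorem~\ref{TH:sun} supplies in both hypothesis regimes, since $a\equiv b\pmod 2$ and $a+b>5$) correctly reduces the problem to finding permutations $\sigma_{ij}$ of $\mathbb{Z}_c$ whose pointwise sums along grid-rows and grid-columns are constant. Your even case is complete: the identity/reversal pair together with a $0$--$1$ matrix having row sums $b/2$ and column sums $a/2$ (e.g.\ a $1$ in cell $(i,j)$ exactly when $i+j$ is even) does the job, and the resulting sums are the forced values $b(abc-1)/2$ and $a(abc-1)/2$.

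The genuine gap is the all-odd case, which is where the theorem's content lives. You need an $a\times b$ array of permutations of $\mathbb{Z}_c$ such that each grid-row has pointwise sum $b(c-1)/2$ at every position and each grid-column has pointwise sum $a(c-1)/2$ at every position, and you do not construct one. Two things are missing. (i) When $c>b$ a full shift-set does not fit in a grid-row, and ``a finer mixture of shift- and reflection-type permutations'' is an aspiration, not a construction; note that a single row already requires $b$ permutations of $\mathbb{Z}_c$ with constant pointwise sum, which is a nontrivial object when $b<c$ (it exists --- e.g.\ for $b=3$, $c=5$ take $(0,1,2,3,4)$, $(3,4,0,1,2)$, $(3,1,4,2,0)$, with constant column sum $6$ --- but you give no general recipe). (ii) Even when $c\le a\le b$, choosing each grid-row to be a shift-set plus reversal pairs says nothing about the columns; the ``Latin-square-type layout'' that is supposed to reconcile the two directions is named but never exhibited, and you acknowledge that this reconciliation is the main difficulty. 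As written, the odd case is a plan for a proof rather than a proof, and it is precisely the case the paper needs in Proposition~\ref{nodd3divm}.
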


\begin{proposition}\label{nodd3divm}
If $n$ is odd and $3| m$, then there exists an $SMR(m,n;k,3)$.
\end{proposition}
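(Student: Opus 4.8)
The goal is to show that when $n$ is odd and $3 \mid m$, an $SMR(m,n;k,3)$ exists. Recall we have the necessary conditions $mk = 3n$ and $3 \le m, k \le n$, and since $n$ is odd, both $m$ and $k$ must be odd. Because $3 \mid m$, write $m = 3a$ for some odd positive integer $a$. Then $mk = 3n$ gives $3ak = 3n$, so $n = ak$. Since $k \le n = ak$ we have $a \ge 1$; since $m = 3a \le n = ak$ we get $k \ge 3$. So we are trying to build an $SMR(3a, ak; k, 3)$ with $a, k$ odd and $k \ge 3$.

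**Applying Theorem~\ref{am.bm.b.a}.** The natural tool is the magic rectangle existence result of Theorem~\ref{am.bm.b.a}. I want to produce a magic rectangle $MR(3a, ak; k, 3)$, which in the notation $MR(\alpha c, \beta c; \beta, \alpha)$ of that theorem means taking $\alpha = 3$, $\beta = k$, and $c = a$: indeed $\alpha c = 3a = m$, $\beta c = ka = n$, with $\beta = k$ filled cells per row and $\alpha = 3$ filled cells per column. The hypotheses of Theorem~\ref{am.bm.b.a} require $2 \le \alpha \le \beta$ with $\alpha, \beta, c$ all odd (the second branch, needing two of them even, is irrelevant here). We have $\alpha = 3$, and $\beta = k \ge 3$ with $k$ odd, and $c = a$ odd, so all three are odd and $2 \le 3 \le k$ holds. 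Hence $MR(3a, ka; k, 3)$ exists. Note $mk = 3ak$ is odd, so Lemma~\ref{MStoSMR} applies (with "$r$" there equal to $k$, "$s$" equal to $3$): from the magic rectangle $MR(m, n; k, 3)$ with $mk$ odd we obtain an $SMR(m, n; k, 3)$.

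**Handling small cases and writing it up.** The cleanest write-up first records $m = 3a$, derives $n = ak$ and $k \ge 3$ from the standing hypotheses $3 \le m, k \le n$ and $mk = 3n$, observes that $n$ odd forces $a$ and $k$ odd, then cites Theorem~\ref{am.bm.b.a} to get the magic rectangle and Lemma~\ref{MStoSMR} to pass to the signed version. The one point requiring a moment's care is the boundary condition "$2 \le a \le b$" in Theorem~\ref{am.bm.b.a}: here $a$ plays the role of $\alpha = 3 \ge 2$ and $b$ the role of $\beta = k$, and we need $3 \le k$, which indeed follows from $m = 3a \le n = ak$. There is no genuinely hard step; the only mild obstacle is bookkeeping the two different "$a, b, c$" versus "$k, m, s$" naming conventions across Theorems~\ref{Th.m.km.ks.s} and~\ref{am.bm.b.a}, and making sure the edge case $a = 1$ (i.e.\ $m = 3$, $n = k$) is still covered — it is, since Theorem~\ref{am.bm.b.a} allows $c = 1$ with $\alpha = 3$, $\beta = k$ odd, and an $SMS(k;k)$-type object is exactly an $SMR(3, k; k, 3)$ after Lemma~\ref{MStoSMR}, consistent with Theorem~\ref{TH:KSW2}.
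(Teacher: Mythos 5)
Your argument is correct and is essentially identical to the paper's own proof: both write $m=3c$, deduce $n=kc$ with $3$, $k$, $c$ all odd, invoke Theorem~\ref{am.bm.b.a} with $a=3$, $b=k$ to obtain an $MR(m,n;k,3)$, and then pass to the signed version via Lemma~\ref{MStoSMR} since $mk$ is odd. Your extra checks of the hypotheses ($3\le k$ from $m\le n$, and the $c=1$ boundary case) are sound and only make explicit what the paper leaves implicit.
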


\begin{proof}
If an $SMR(m,n;k,3)$ exists, then $mk=3n$. So by assumption $m$ and $k$ are odd.
Let $m=3c$. Then $mk=3n$ implies that $n=kc$. Now apply Theorem \ref{am.bm.b.a},
with $a=3$, $b=k$ and $c$ to obtain an $MR(m,n;k,3)$.  Now by Lemma \ref{MStoSMR}
there exists an $SMR(m,n;k,3)$.
\end{proof}

\section{The existence of an $SMR(m,n;k,3)$ when $n,k$ are even.}\label{SEC 4}

In this section and the next section we make use of the following result.
Since the structure of the $SMR(3,2)$ given below is crucial in our constructions of an $SMR(m,n;k,3)$ we include the proof of this lemma here which can also be found in \cite{KSW}.

\begin{lemma} \label{3xeven}
	An $SMR(3,n)$  exists if $n$ is even.
\end{lemma}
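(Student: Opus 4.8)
The plan is to construct an explicit $SMR(3,n)$ for every even $n$, building it up from a small base block of size $3 \times 2$. First I would exhibit an $SMR(3,2)$ directly: since $mr = 6$ is even, the entry set is $X = \{\pm 1, \pm 2, \pm 3\}$, and I would fill the $3 \times 2$ array so that each of the three rows and each of the two columns sums to zero. A natural choice is to place $\{1,-1\}$, $\{2,-2\}$, $\{3,-3\}$ across the three rows while arranging signs so the two columns also cancel; for instance the rows $(1,-1)$, $(-2,2)$... actually one must be slightly more careful since we also need column sums zero, so I would use a row/column pattern such as column one $= (3, -1, -2)^T$ and column two $= (-3, 1, 2)^T$, giving row sums $0,0,0$ and column sums $0,0$. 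This is the base case $n = 2$.

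Next, for general even $n = 2t$, the idea is to juxtapose $t$ suitably relabeled copies of the $3 \times 2$ block side by side. Concretely, I would take the block $B$ above and, for the $j$-th block ($j = 0, 1, \ldots, t-1$), add $3j$ (in absolute value, respecting signs) to every entry, i.e. replace an entry $x$ by $x + 3j \cdot \mathrm{sgn}(x)$. Since the original block uses $\{\pm 1, \pm 2, \pm 3\}$, the $j$-th block then uses $\{\pm(3j+1), \pm(3j+2), \pm(3j+3)\}$, and the union over all $j$ is exactly $\{\pm 1, \ldots, \pm 3t\} = \{\pm 1, \ldots, \pm (3n/2)\}$, the correct entry set for an $SMR(3,n)$. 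Because the sign pattern is preserved in each block, every row of the concatenated array is a sum of $t$ zero-sums (one per block) and hence is zero, and every column lies entirely within a single block and already sums to zero. Each row has $n$ filled cells and each column has $3$ filled cells, as required.

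The step I expect to be the only real point needing care is the base block: one must choose the signs in the $3 \times 2$ array so that \emph{both} the row sums and the column sums vanish simultaneously, and also verify that this sign pattern is the same in every shifted copy so that the telescoping-of-zero-sums argument for the rows goes through cleanly. Once a valid $3 \times 2$ block is pinned down, everything else is bookkeeping: checking the entry set is exactly $X$, checking the fill counts ($n$ per row, $3$ per column), and checking row and column sums. I would close by remarking that this construction is exactly the one appearing in \cite{KSW}, and that its explicit block structure is what makes it reusable as a building block in the later sections.
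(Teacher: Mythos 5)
Your base case is a valid $SMR(3,2)$, and the entry-set bookkeeping is right, but the concatenation step fails at the columns --- precisely the place you did not flag as needing care. The signed shift $x\mapsto x+3j\cdot\mathrm{sgn}(x)$ changes the sum of a set of entries by $3j(p-q)$, where $p$ and $q$ count the positive and negative entries of the set. Each row of a block has one entry of each sign, so the row sums survive the shift; but each \emph{column} of a block has three entries, so $p-q$ is odd and the shifted column sum is $\pm 3j$ or $\pm 9j$, never $0$ for $j\ge 1$. Concretely, with your block (columns $(3,-1,-2)^{T}$ and $(-3,1,2)^{T}$), the $j$-th copy has column sums $-3j$ and $+3j$.

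This cannot be repaired by a cleverer base block or relabelling within your framework. If every block is to have zero row sums, each of its rows is $(x,-x)$, so its entry set is $\{\pm a,\pm b,\pm c\}$, and the zero-column-sum condition forces one of $a,b,c$ to equal the sum of the other two. A column-disjoint tiling by such blocks therefore requires a partition of $\{1,\dots,3n/2\}$ into triples $\{a,b,c\}$ with $a+b=c$; summing all elements gives $2\sum c=\tfrac12\cdot\tfrac{3n}{2}\bigl(\tfrac{3n}{2}+1\bigr)$, which forces $n\equiv 0,2\pmod 8$. For $n=4$ and $n=6$ no such partition exists, so no $SMR(3,n)$ of your block form exists at all --- note that the paper's $SMR(3,4)$ in Figure \ref{3x2.3x4} is deliberately not two side-by-side $3\times 2$ blocks. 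The paper's general construction instead interleaves the pairs $\pm x$ of rows $1$ and $2$ across columns in a pattern of period $4$ in the column index, reserves row $3$ for the multiples of $3$ with $\mp 3n/2$ placed in the first and last columns, and then defines row $2$ as the negated sum of rows $1$ and $3$ so that the column sums vanish by construction, leaving only the row sums and the entry set to check. To salvage your approach you would have to prove the triple-partition existence for $n\equiv 0,2\pmod 8$ and supply an entirely different construction for $n\equiv 4,6\pmod 8$.
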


\begin{proof}
An $SMR(3,2)$ and an $SMR(3,4)$ are given in Figure \ref{3x2.3x4}.

\begin{figure}[ht]
$$\begin{array}{cccc}
    \begin{array}{|c|c|}
	\hline
	1 & -1 \\\hline
	2 & -2 \\\hline
	-3 & 3 \\\hline
	\end{array}&&&
	\begin{array}{|c|c|c|c|}\hline
	1 & -1 & 2 & -2 \\\hline
	5 & 4 & -5 &-4 \\\hline
	-6 & -3 & 3 & 6 \\\hline
	\end{array}\\
\end{array}$$
\caption{An $SMR(3,2)$ and an $SMR(3,4)$ }
		\label{3x2.3x4}
	\end{figure}

    Now let $n=2k\geq 6$ and $p_{j}=\lceil\frac{j}{2}\rceil$ for $1\leq j\leq 2k$ . Define a $3\times n$ array $A=[a_{i,j}]$ as follows: For $1\leq j\leq 2k$,

$$a_{1,j}= \begin{cases}
	- \left(\frac{3p_{j}-2}{2}\right) & j \equiv 0 \pmod4 \\
	\frac{3p_{j}-1}{2}& j \equiv 1 \pmod4 \\
	-\left(\frac{3p_{j}-1}{2}\right) & j \equiv 2 \pmod4 \\
	\frac{3p_{j}-2}{2} & j \equiv 3 \pmod4. \\
	
\end{cases}$$
For the third row we define $a_{3,1}=-3k$, $a_{3,2k}=3k$ and when $2\leq j\leq 2k-1$

$$a_{3,j}= \begin{cases}
	- 3(k-p_{j}) & j \equiv 0\pmod4 \\
	3(k-p_{j}+1) & j \equiv 1 \pmod4 \\
	-3(k-p_{j}) & j \equiv 2 \pmod4 \\
	3(k-p_{j}+1) & j \equiv 3 \pmod4. \\
\end{cases}$$
Finally, $a_{2,j}=-(a_{1,j}+a_{3,j})$ for $1\leq j\leq2k$ (see Figure \ref{3x10}).
It is straightforward to see that array $A$ is an $SMR(3,n)$.
\end{proof}

\begin{figure}[ht]
$$\begin{array}{|c|c|c|c|c|c|c|c|c|c|}\hline
1&-1&2&-2&4&-4&5&-5&7&-7 \\\hline
14&13&-14&11&-13&10&-11&8&-10&-8	\\\hline
-15&-12&12&-9&9&-6&6&-3&3&15\\\hline
\end{array}$$
\caption{An $SMR(3,10)$ using the method given in Lemma \ref{3xeven}.}
		\label{3x10}
\end{figure}

Let ${\cal A}=\{A_1,A_2,\ldots,A_r\}$ and ${\cal B}=\{B_1,B_2,\ldots,$ $B_s\}$ be two partitions of a set $S$.
We say the partitions ${\cal A}$ and ${\cal B}$ are {\em near orthogonal} if $|A_i\cap B_j|\leq 1$
for all $1\leq i\leq r$ and $1\leq j\leq s$.

\begin{theorem}\label{mainconstruction}
Let $A$ be an $SMR(m,n)$ with entries from the set $X$. Let ${\cal P}_1=\{C_1,C_2,\ldots,C_n\}$, where
$C_i$'s are the columns of $A$.
Let $k\geq m$ and $k|mn$. If there exists a partition ${\cal P}_2=\{D_1,D_2,\ldots,D_{\ell}\}$ of $X$, where  $\ell=mn/k$,
such that $|D_i|=k$, the sum of members in each $D_i$ is zero for $1\leq i\leq \ell$, and  ${\cal P}_1$ and ${\cal P}_2$
are near orthogonal, then there exists an $SMR(mn/k,n;k,m)$.
\end{theorem}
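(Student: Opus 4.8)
The plan is to build the desired $SMR(mn/k,n;k,m)$ directly from the given $SMR(m,n)$ array $A$ by "stacking" the rows of $A$ into groups governed by the partition ${\cal P}_2$. Write $\ell=mn/k$ for the target number of rows. First I would set up the bijection that encodes the construction: since each $D_i$ has exactly $k$ elements and the columns $C_1,\dots,C_n$ of $A$ partition the same set $X$, near orthogonality says each $D_i$ meets each column $C_j$ in at most one cell. Because $|D_i|=k\geq m=|C_j|$ is false in general (we only know $k\geq m$), I should instead count incidences: the $k$ elements of $D_i$ lie in $k$ distinct columns of $A$ (distinctness is exactly near orthogonality, since two elements of $D_i$ in the same column would force $|D_i\cap C_j|\geq 2$). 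So $D_i$ selects one entry from each of $k$ of the $n$ columns. This is the key combinatorial fact that makes the construction work.

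Next I would define the new array $B=[b_{i,j}]$ of size $\ell\times n$ as follows: for each $i$ with $1\leq i\leq\ell$ and each column index $j$, if the (unique, if it exists) element of $D_i$ lying in column $C_j$ of $A$ is $x$, then put $b_{i,j}=x$; otherwise leave cell $(i,j)$ empty. I then need to verify the four defining properties of an $SMR(\ell,n;k,m)$. (1) Entry set: every element of $X$ lies in exactly one $D_i$ and in exactly one column of $A$, hence appears exactly once in $B$; and $X$ is precisely the symbol set required for an $SMR$ with $\ell k = mn$ filled cells, which matches since the row/symbol arithmetic is inherited from $A$. (2) Row counts: row $i$ of $B$ has exactly $|D_i|=k$ filled cells, one per column hit by $D_i$. (3) Column counts: column $j$ of $B$ is filled in cell $(i,j)$ precisely when $D_i$ contains the element of $C_j$ in row-position... more carefully, column $C_j$ of $A$ has $m$ entries, each lying in exactly one $D_i$, and near orthogonality guarantees these $m$ entries lie in $m$ distinct parts $D_i$; hence column $j$ of $B$ has exactly $m$ filled cells. (4) Sums: the sum of row $i$ of $B$ equals $\sum_{x\in D_i}x=0$ by hypothesis, and the sum of column $j$ of $B$ equals the sum of all entries of column $C_j$ of $A$, which is $0$ because $A$ is an $SMR(m,n)$.

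The main obstacle, and the only place real care is needed, is item (3): showing that the $m$ entries of a fixed column $C_j$ of $A$ are distributed among $m$ \emph{distinct} blocks of ${\cal P}_2$. This is where near orthogonality is used in the "other direction": if two entries of $C_j$ fell in the same $D_i$, then $|C_j\cap D_i|\geq 2$, contradicting the definition. Once this is in hand, everything else is bookkeeping, and I would present the argument compactly, perhaps tabulating the verification of the four $SMR$ axioms rather than belaboring each. I would also note in passing that the divisibility and size hypotheses ($k\geq m$, $k\mid mn$) are exactly what guarantee $\ell=mn/k$ is a positive integer no larger than $n$, so that the output dimensions are legitimate for an $SMR$; the genuine content is entirely in the existence of the near-orthogonal partition ${\cal P}_2$, which the theorem takes as given.
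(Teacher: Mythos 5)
Your proposal is correct and follows essentially the same route as the paper: both build the array $B$ by placing each element of $D_i$ into row $i$ and the column of $A$ that contains it, then use near orthogonality to guarantee at most one entry per cell and inherit the zero row and column sums from the $D_i$'s and the columns of $A$. Your write-up is simply more explicit about the cell-count verifications (exactly $k$ per row, exactly $m$ per column), which the paper leaves implicit.
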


\begin{proof}
Let $B$ be an $mn/k$ by $n$ empty array. We use the members of $D_i$ to fill $\ell$ cells of row $i$ of $B$. Let $d\in D_i$. Then there is a unique
column $C_j$ of $A$ which contains $d$. We place $d$ in row $i$ and column $j$ of $B$. Note that the members used in $B$ are
precisely the members in $X$ because ${\cal P}_2$ is a partition of $X$. Since ${\cal P}_1$ and ${\cal P}_2$ are near orthogonal, each cell of $B$ has at most one member.
By construction, row $i$ of $B$ and $D_i$ have the same members and column $j$ of $B$ and $C_j$ have the same members.
So the sum of each row and each column of $B$ is zero. Hence, $B$ is an $SMR(mn/k,n;k,m)$.
\end{proof}

\begin{proposition}\label{n and k even}
Let $k$ and $n$ be even integers, $k\geq 4$ and $k|3n$. Then there exists an $SMR(3n/k,n;k,3)$.
\end{proposition}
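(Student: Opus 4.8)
The plan is to apply Theorem~\ref{mainconstruction} with $m=3$, taking for $A$ the explicit $SMR(3,n)$ constructed in Lemma~\ref{3xeven} (which exists because $n$ is even) and for ${\cal P}_1$ its column partition. With $X=\{\pm1,\dots,\pm 3n/2\}$ and $\ell=3n/k$, everything reduces to exhibiting a partition ${\cal P}_2=\{D_1,\dots,D_\ell\}$ of $X$ into zero-sum $k$-subsets that is near orthogonal to ${\cal P}_1$; Theorem~\ref{mainconstruction} then produces the required $SMR(3n/k,n;k,3)$. Since $k$ is even I will insist that each $D_i$ be a union of $k/2$ pairs $\{x,-x\}$, so that it is automatically zero-sum. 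Two features of $A$ make the near-orthogonality condition easy to control: (i) no column of $A$ contains a pair $\{x,-x\}$, since otherwise its third entry would be $0\notin X$; and (ii) each of the three rows of $A$ is a union of pairs $\{x,-x\}$ --- this is clear from the defining formulas for rows $1$ and $3$, and row $2$ then inherits it as the complement in $X$ of the other two rows, each of these three sets being closed under negation.

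Next I would describe the \emph{pair graph} $G$ of $A$: its vertices are the $n$ columns, and each pair $\{x,-x\}$ contributes an edge between the column containing $x$ and the one containing $-x$, which by (i) are distinct, so $G$ is $3$-regular and loopless. Reading the columns cyclically as $1,2,\dots,n$, the pairs in row $1$ give the edges $\{1,2\},\{3,4\},\dots,\{n-1,n\}$ and the pairs in row $3$ give $\{2,3\},\{4,5\},\dots,\{n,1\}$, so rows $1$ and $3$ together contribute all $n$ edges of the cycle $C_n$ on the columns, while the $n/2$ pairs in row $2$ form a perfect matching $M$ of the columns. Thus $G=C_n\cup M$ is Hamiltonian, so $\chi'(G)=3$. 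By (i), a union $D$ of $k/2$ pairs is near orthogonal to ${\cal P}_1$ precisely when the corresponding $k/2$ edges of $G$ are pairwise vertex-disjoint. Hence the whole problem becomes graph-theoretic: partition $E(G)$ into $\ell$ matchings, each of size $k/2$. (The case $k=n$ is degenerate, since $SMR(3,n;n,3)=SMR(3,n)$ already exists, so assume $k<n$; then $\ell\ge3$ and $k/2=|E(G)|/\ell$.)

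If $k\mid n$ this is immediate: the three colour classes of the natural $3$-edge-colouring of $G$ --- the row-$1$ pairs, the row-$3$ pairs, and $M$ --- are matchings of size $n/2$, and $k/2\mid n/2$, so each splits into $n/k$ matchings of size $k/2$, yielding $3n/k=\ell$ matchings altogether. The remaining case, $k\nmid n$, is the main obstacle. Here, writing $d=\gcd(k,n)$, the condition $k\mid 3n$ forces $k=3d$ and $n=dt$ with $d$ even, $3\nmid t$, $t\ge4$, and $\ell=t$, so one must split $E(G)$ into $t$ matchings of size $3d/2$. I would obtain this either by appealing to the standard fact that any graph has, for every number of colours at least its chromatic index, a proper edge colouring whose colour classes differ in size by at most one (here $t>\chi'(G)=3$ and $t\mid|E(G)|$, so all $t$ classes have size exactly $3d/2$), or, if an explicit construction is wanted, by first partitioning the $n$ cycle edges into $t$ matchings of size $d$ and then distributing the $n/2$ chords of $M$ among them so that each enlarged class is still a matching --- for which the explicit form of $M$ coming from Lemma~\ref{3xeven} is used, the delicate sub-case being $t=4$.
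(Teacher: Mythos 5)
Your proposal is correct, and it follows the paper for the skeleton (apply Theorem~\ref{mainconstruction} to the $SMR(3,n)$ of Lemma~\ref{3xeven}; in the case $k\mid n$ both proofs split each negation-closed row into negation-closed $k$-subsets). Where you genuinely diverge is in the cases $k\nmid n$, i.e.\ $n\equiv k/3$ or $2k/3\pmod k$. There the paper assembles, by hand, one or two extra zero-sum $k$-subsets $D_\ell$ (and $D_{\ell-1}$) as unions of negation-closed $(k/3)$-subsets drawn from all three rows and \emph{asserts} that these can be chosen to avoid column clashes, then pairs the leftovers row by row. You instead encode each pair $\{x,-x\}$ as an edge of a cubic multigraph $G$ on the columns, correctly identify $G$ as a Hamiltonian cycle (rows $1$ and $3$) plus a perfect matching (row $2$), note that a negation-closed zero-sum $k$-subset near orthogonal to the columns is exactly a matching of $k/2$ edges, and finish with de Werra's balanced edge-colouring theorem: since $\ell=t\geq 4>\chi'(G)=3$ and $\ell$ divides $|E(G)|=3n/2$, the edge set splits into $\ell$ matchings of equal size $k/2$. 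Your number-theoretic reduction ($k=3d$, $n=dt$, $d$ even, $3\nmid t$) is also right. This route is uniform over all three residues, makes near-orthogonality automatic, and is tighter precisely where the paper is sketchiest, at the cost of importing one standard external result; the paper's argument stays elementary and self-contained but leaves the existence of the cross-row sets $E_i$ unverified. Your fallback ``explicit'' distribution of the chords is only a sketch, but the de Werra route alone already completes the proof, so nothing essential is missing.
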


\begin{proof}
Let $A$ be the $SMR(3,n)$ constructed in the proof of Lemma \ref{3xeven} with elements in
$X=\{\pm 1, \pm 2,\ldots,\pm 3n/2\}$.
Let ${\cal P}_1=\{C_1,C_2,\ldots,C_n\}$, where
$C_i$'s are the columns of $A$. Obviously, ${\cal P}_1$ is a partition of $X$.
By the proof of Lemma \ref{3xeven}, we see that if $x$ appears in row $i$ of $A$, then $-x$
also appears in row $i$.

We construct a partition ${\cal P}_2=\{D_1,D_2,\ldots,D_{\ell}\}$ of $X$, where  $\ell=3n/k$,
such that $|D_i|=k$, the sum of members in each $D_i$ is zero for $1\leq i\leq \ell$, and  ${\cal P}_1$ and ${\cal P}_2$
are near orthogonal. Then by Theorem \ref{mainconstruction}, there exists an $SMR(3n/k,n;k,3)$, as desired.
Let $n=kq+r$ with $0\leq r<k$. By assumption, it is easy to see that $r=0$, $r=k/3$ or $r=2k/3$.
So we consider 3 cases.

\vspace{5mm}

\noindent {\bf Case 1: $r=0$.}\quad Since $k$ and $n$ are even and for $x\in X$ both $x$ and $-x$ appear in the same row of $A$, we can partition each row of $A$ into $n/k$ $k$-subsets such that if $x$ is in a $k$-subset, then $-x$ is also in that $k$-subset. Hence the sum of members in each $k$-subset is zero.
Let ${\cal P}_2=\{D_1,D_2,\ldots,D_{\ell}\}$, $\ell=3n/k$, be the collection of all these $k$-subsets.
Since $D_i$ is a subset of a row of $A$ for $1\leq i\leq \ell$, it follows that the partitions ${\cal P}_1$ and ${\cal P}_2$ are near orthogonal. So the result follows by Theorem \ref{mainconstruction}.

\vspace{5mm}

\noindent {\bf Case 2: $r=k/3$.}\quad Then $\ell=3n/k=3q+1$. We partition each row of $A$ into $q$ $k$-subsets and one $k/3$-subset.
Note that $k/3$ is an even number because $k$ is even. First we form a $k/3$-subset $E_i$ of row $i$ for $i=1,2,3$, such that if $x$ is in $E_i$, then $-x$ is also in $E_i$. In addition, no two members of $D_{\ell}=E_1\cup E_2\cup E_3$ are in the same column of $A$. Now consider the set $F_i$ which consists of the elements in row $i$ that are not in $E_i$ for $i=1,2,3$. The size of $F_i$ is even and the members of $F_i$ can be paired as $x,-x$ for some $x\in F_i$.
Hence we can partition each $F_i$ into $k$-subsets  $D_1^i, D_2^i,\ldots,D_q^i$ such that if $a\in D_j^i$, then $-a\in D_j^i$, where $1\leq i\leq 3$ and $1\leq j\leq q$.
Consider the partition ${\cal P}_2=\{D_1^i,D_2^i,\ldots,D_q^i, D_{\ell}\mid 1\leq i\leq 3\}$.
By construction, no two members of a $k$-subset $Y\in {\cal P}_2$ belong to the same column of $A$,
so the partitions ${\cal P}_1$ and ${\cal P}_2$ are near orthogonal.
In addition, the sum of the members of $Y$ is zero.
Now the result follows by Theorem \ref{mainconstruction}.

\vspace{5mm}

\noindent {\bf Case 3: $r=2k/3$.}\quad Then $\ell=3n/k=3q+2$. We partition each row of $A$ into $q$ $k$-subsets and two $k/3$-subsets.
Note that $k/3$ is an even number because $k$ is even. First we form two $k/3$-subsets $E_i^1, E_i^2$ of row $i$ for $i=1,2,3$, such that if $x$ is in $E_i^1$ or $E_i^2$, then $-x$ is also in $E_i^1$ or $E_i^2$, respectively. In addition, no two members of $D_{\ell-1}=E_1^1\cup E_2^1\cup E_3^1$ or of  $D_{\ell}=E_1^2\cup E_2^2\cup E_3^2$
are in the same column of $A$. Now consider the set $F_i$ which consists of the elements in row $i$ that are not in $E_i^1\cup E_i^2$ for $i=1,2,3$. The size of $F_i$ is even and the members of $F_i$ can be paired as $x,-x$ for some $x$.
Hence, we can partition each $F_i$ into $k$-subsets  $D_1^i, D_2^i,\ldots,D_q^i$ such that if $a\in D_j^i$, then $-a\in D_j^i$, where $1\leq i\leq 3$ and $1\leq j\leq q$.
Consider the partition ${\cal P}_2=\{D_1^i,D_2^i,\ldots,D_q^i, D_{\ell-1}, D_{\ell}\mid 1\leq i\leq 3\}$.
By construction, no two members of a $k$-subset $Y\in {\cal P}_2$ belong to the same column of $A$, so
the partitions ${\cal P}_1$ and ${\cal P}_2$ are near orthogonal.
In addition, the sum of the members of $Y$ is zero.
Now the result follows by Theorem \ref{mainconstruction}.
\end{proof}

\section{The existence of an $SMR(m,n;k,3)$ when $n$ is even and $k$ is odd.}\label{SEC 5}
Let $n$ be a positive integer.
Let $A$ be the $SMR(3,n)$ constructed in the proof of Lemma \ref{3xeven} with elements in
$X=\{\pm 1, \pm 2,\ldots,$ $\pm 3n/2\}$.
For $i=1,2,3$, let $R_i$ consist of the entries in row $i$ of $A$.
By the proof of Lemma \ref{3xeven}, if $n\equiv 0 \pmod{4}$, then
\begin{equation}\label{R1R2.n=0mod4}
\begin{array}{l}
R_1=\{\pm (3i+1), \pm(3i+2) \mid 0\leq i\leq (n-4)/4\},\\
R_2=\{\pm (3i+1), \pm(3i+2) \mid n/4\leq i\leq (n-2)/2\}.
\end{array}
\end{equation}

\noindent If $n\equiv 2 \pmod{4}$, then
\begin{equation}\label{R1R2.n=2mod4}
\begin{array}{lll}
R_1&=&\{\pm (3i+1) \mid 0\leq i\leq (n-2)/4\}\\
&\cup&\{ \pm(3i+2)\mid 0\leq i\leq (n-6)/4\},\\
R_2&=&\{\pm (3i+1) \mid (n+2)/4\leq i\leq (n-2)/2\}\\
& \cup&\{\pm(3i+2)\mid (n-2)/4\leq i\leq (n-2)/2\}.
\end{array}
\end{equation}
And
\begin{equation}\label{R3.n=}
R_3=\{\pm3i\mid 1\leq i\leq n/2\}.
\end{equation}

For a set of numbers $L$ we define $L'=\{-a\mid a\in L\}$.

\begin{remark}\label{rmk}
Let $n>k$, $k$ odd and $n$ even. If $k| 3n$ and $n=kq+r$, where $0\leq r <k$, then
$r=0, k/3$ or $2k/3$.
\end{remark}

\begin{lemma}\label{Lem:S_1.S_2}
Let $n>k\geq 5$ with $k$  odd and $n$ even such that $k| 3n$. Let $n=kq+r$, where $0\leq r<k$.
Then there exist two sets of {\rm 3}-subsets of $X=\{\pm 1, \pm 2,\ldots,\pm3n/2\}$, say $S_1$ and $S_2$,
such that
\begin{enumerate}
\item the member sum of each {\rm 3}-subset is zero;
\item each {\rm 3}-subset of $S_1$ has one member in $R_1$ and two members in $R_2$
and each {\rm 3}-subset of $S_2$ has one member in $R_2$ and two members in $R_1$;
\item if $\{a,b,c\}$ is a member of $S_1$ or of $S_2$, then $\{-a,-b,-c\}$ is also a member of $S_1$ or $S_2$, respectively;
\item $|S_1|=|S_2|\geq q,q+1,q+2$ if $r=0,k/3, 2k/3$, respectively;
\item the {\rm 3}-subsets of $S_1\cup S_2$ are all disjoint.
\end{enumerate}
\end{lemma}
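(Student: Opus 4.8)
The plan is to give an explicit construction, producing the $3$-subsets of $S_1$ and of $S_2$ in negation-closed pairs $\{T,-T\}$. Write $R_i^{+}=R_i\cap\mathbb Z^{+}$. From \eqref{R1R2.n=0mod4}, \eqref{R1R2.n=2mod4} and \eqref{R3.n=} one reads off that $R_3=\{\pm3i:1\le i\le n/2\}$ and that $R_1\cup R_2=\{\pm a:1\le a\le 3n/2,\ 3\nmid a\}$, with $R_1^{+}$ the set of the $n/2$ smallest non-multiples of $3$ in $[1,3n/2]$ and $R_2^{+}$ the set of the next $n/2$. By Remark~\ref{rmk} we have $r\in\{0,k/3,2k/3\}$; write $\delta=0,1,2$ in these three cases. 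Note that the target value $q+\delta$ for $|S_1|=|S_2|$ is always even: for $r=0$, $n=kq$ with $k$ odd forces $q$ even; for $r=k/3$, since $3\mid k$ and $k$ is odd, $k/3$ is odd, so $n=kq+k/3$ forces $q$ odd; for $r=2k/3$, $2k/3$ is even, so $n=kq+2k/3$ forces $q$ even. Hence it suffices to exhibit $(q+\delta)/2$ pairwise disjoint negation-pairs inside each of $S_1$ and $S_2$.

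For $S_1$ (one member in $R_1$, two in $R_2$) I will use triples of the shape $\{c,\,a,\,-(a+c)\}$ with $c\in R_1^{+}$ and $a,\,a+c\in R_2^{+}$; this sums to $0$, its small entry $c$ lies in $R_1$ while the large entries $a$ and $-(a+c)$ lie in $R_2$, it is a genuine $3$-subset, and its negative $\{-c,\,-a,\,a+c\}$ has the same shape. For $S_2$ (two members in $R_1$, one in $R_2$) I will use $\{c_1,\,c_2,\,-(c_1+c_2)\}$ with $c_1,c_2\in R_1^{+}$ distinct and $c_1+c_2\in R_2^{+}$; this sums to $0$, has the required membership pattern, and forms a negation-pair with $\{-c_1,\,-c_2,\,c_1+c_2\}$. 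The only arithmetic conditions are that $a+c$ and $c_1+c_2$ not be multiples of $3$, i.e.\ $a\not\equiv -c$ and $c_1\not\equiv -c_2\pmod 3$.

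To force disjointness I would split $R_1^{+}$ into a bottom band and a top band, and $R_2^{+}$ likewise, and then: draw the values $c$ for $S_1$ from the bottom of $R_1^{+}$ and the values $a$ for $S_1$ from the bottom of $R_2^{+}$, chosen with enough spacing that the $a$'s and the (only slightly larger) $a+c$'s stay in that bottom band and are all distinct; draw the pairs $c_1,c_2$ for $S_2$ from the top of $R_1^{+}$, so that $c_1+c_2$ is large and lands in the top of $R_2^{+}$, again spaced so the $c_i$'s and the $c_1+c_2$'s are all distinct. Since the two bands of $R_1^{+}$ are disjoint and likewise those of $R_2^{+}$, and each triple and its negative use only $\pm$ of the displayed values, all the chosen triples of $S_1\cup S_2$ are pairwise disjoint. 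The count works with room to spare: $S_1\cup S_2$ consumes $3|S_1|=3(q+\delta)$ members of $R_1$ and as many of $R_2$, and $3(q+\delta)\le n$ holds in every case, since for $\delta=0$ we have $n=kq$ with $k\ge5$, while for $\delta\in\{1,2\}$ we have $3\mid k$ with $k$ odd, hence $k\ge9$, together with $n=kq+\delta k/3$ and $q\ge1$.

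I expect the main obstacle to lie in the explicit choices rather than in any new idea: the $c$'s, $a$'s and pairs $(c_1,c_2)$ must be picked so that every sum $a+c$ and $c_1+c_2$ avoids the multiples of $3$ and so that the six families of values $c$, $a$, $a+c$, $c_1$, $c_2$, $c_1+c_2$ are mutually disjoint. This forces a little case analysis on residues mod $3$ (each $S_1$-triple consumes residues $(\rho,\rho,-\rho)$ with $\rho\in\{1,2\}$, so one balances the number of $\rho=1$ and $\rho=2$ triples against the supply available in each band), and, for the finitely many smallest admissible $n$ for each $k$, a direct verification (for instance, when $n=10$, $k=5$ a single negation-pair suffices in each, namely $S_1=\{\{2,8,-10\},\{-2,-8,10\}\}$ and $S_2=\{\{7,4,-11\},\{-7,-4,11\}\}$). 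Once the two triple-shapes and the band decomposition are fixed, checking properties (1)--(5) is routine.
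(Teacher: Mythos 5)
Your proposal correctly identifies the architecture of the construction, and it is in fact the same architecture the paper uses: the triples $M_{i_1},M_{i_2}$ of \eqref{Eq5} are exactly of your shape $\{c,a,-(a+c)\}$ with $c\in R_1$ positive and $a,\,a+c$ positive in $R_2$, the triples $N_{i_1},N_{i_2}$ of \eqref{Eq6n=0} and \eqref{Eq7n=2} are of your shape $\{c_1,c_2,-(c_1+c_2)\}$, and both are assembled into negation-closed pairs plus a list of small cases, just as you propose. Your parity observation that the target $q+\delta$ is always even is correct, the aggregate count $3(q+\delta)\le n$ is correct, and your $(10,5)$ example checks out.

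The difficulty is that the proposal stops exactly where the proof has to begin. Everything after ``To force disjointness I would split $R_1^{+}$ into a bottom band and a top band'' is conditional: you never specify the bands, never exhibit the actual sequences of $c$'s, $a$'s and pairs $(c_1,c_2)$, and never verify that they can simultaneously (i) be mutually disjoint across all six families, (ii) satisfy $a\equiv c$ and $c_1\equiv c_2\pmod 3$ so that the sums avoid $R_3$ --- a residue-balancing constraint you name but do not resolve, (iii) keep $a+c$ below $3n/2$ while keeping $c_1+c_2$ above the threshold near $3n/4$ separating $R_1$ from $R_2$, and (iv) reach the required cardinality $q$, $q+1$ or $q+2$. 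That verification is the entire substance of the paper's proof: it is several pages of inequality checking split by the residue of $n$ modulo $4$, driven by the bound $p=\lceil q/4\rceil<n/(4k)+1$ of \eqref{EqforP}, together with a table of fourteen explicit small cases for $n\le 30$ where the generic construction does not apply, and a separate patch (the sets $\bar M_{i_1},\bar N_{i_1}$ and \eqref{Eq8.1}) for the subcase $r=2k/3$, $q\equiv 0\pmod 4$, in which the generic count $4p=q$ falls two triples short of the required $q+2$ --- a shortfall your ``room to spare'' estimate does not detect, since it only bounds the total supply of elements and not the number of triples a particular parametrization yields. A constructive existence lemma is not proved by describing the kind of object one would construct and anticipating that the details will work out; as written, this is a sound plan with the same skeleton as the paper's argument but with its load-bearing content missing.
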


\begin{proof}
Figure \ref{small cases} displays the sets $S_1=\{M_i,M_i'\}$ and $S_2=\{N_i,N_i'\}$ for
$(n,k)\in \{(10,5),(12,9),(14,7),(18,9),(20,5),\newline (20,15),(22,11),(24,9), (26,13),
(28,7),(28,14),(30,5), (30,9),\newline(30,15)\}.$
It is easy to see that the 3-sets in $S_1$ and $S_2$ satisfy items 1-5.
So, we may assume from now on that $n \geq 34$.

Let $p=\lceil q/4\rceil$ and note that $q=\lfloor n/k\rfloor \leq n/k$, hence
\begin{equation}\label{EqforP}
p=\lceil q/4\rceil\leq \lceil n/(4k)\rceil < n/(4k)+1.
\end{equation}
For $0\leq i \leq p-1$ we define
\begin{equation}\label{Eq5}
\begin{array}{l}
M_{i_1}=\{3i+1,	3n/2-2-12i,	-(3n/2-1-9i)\}\mbox{ and }\\
M_{i_2}=\{3i+2,	3n/2-7-9i,	-(3n/2-5-6i)\}.
\end{array}
\end{equation}
By construction, the member sums of $M_{i_1}$ and of  $M_{i_2}$ are zero.

We also note that if $n\equiv 0 \pmod 4$, by (\ref{EqforP}),
$$3i+1, 3i+2\leq 3(n-4)/4+2$$
and

$$3n/2-2-12i\geq 3n/2-2-12(p-1)> 3n/2-3n/k-2.$$
So $3n/2-2-12i>3(n-4)/4+2.$
Similarly,
$$
3n/2-1-9i,
3n/2-7-9i,
3n/2-5-6i
> 3(n-4)/4+2,$$
for $0\leq i\leq p-1$.

If $n\equiv 2 \pmod 4$, by (\ref{EqforP}),
$$3i+1,3i+2\leq 3(n-2)/4+1$$
and
$$3n/2-2-12i> 3n/2-3n/k -2 > 3(n-2)/4+1.$$

Similarly,
$$3n/2-1-9i, 3n/2-7-9i,3n/2-5-6i > 3(n-2)/4+1,$$
for $0\leq i \leq p-1$.

Hence, $M_{i_1}$ and $M_{i_2}$ each have one member in $R_1$ and two members in $R_2$.

If $3n/2-2-12i=3n/2-5-6j$, then $-4i+2j=-1$ which is impossible.

If $3n/2-7-9i=3n/2-1-9j$, then $3(j-i)=2$ which is impossible.

Therefore the $2p$ 3-subsets $M_{i_1}$ and $M_{i_2}$ are disjoint and if $a$ appears in this collection of
3-subsets, $-a$ does not appear in this collection.

We now define $2p$ 3-subsets each consisting of two members in $R_1$ and one member in $R_2$.
First let $n\equiv 0\pmod 4$. For $0\leq i\leq p-1$ define
\begin{equation}\label{Eq6n=0}
\begin{array}{l}
N_{i_1}=\{3i+3p+1,(3n/4)-2-6i , -[(3n/4)-1+3(p-i)]\}\\
N_{i_2}=\{3i+3p+2,(3n/4)-4-6i , -[(3n/4)-2+3(p-i)]\}.
\end{array}
\end{equation}
By construction, the member sums of $N_{i_1}$ and of  $N_{i_2}$ are zero.
We also note that $3p+1+3i,3n/4-2-6i, 3p+2+3i,3n/4-4-6i \leq 3(n-4)/4+2$  and $3n/4-1+3(p-i), 3n/4-2+3(p-i) > 3(n-4)/4+2$,
where $0\leq i\leq p-1$.

Hence, $N_{i_1}$ and $N_{i_2}$ each have two members in $R_1$ and one member in $R_2$.

If $3p+1+3i=3n/4-2-6j$, then $n/4=p+i+2j+1\leq 4p-2 < n/k+2$,  by (\ref{EqforP}).
If $k\geq 7$, then $n/4<n/7+2$ which is false because $n\geq 34$.
If $k=5$, then $n/4<n/5+2$ is false for $n\geq 40$.

If $3p+2+3i=3n/4-4-6j$, then $n/4=p+i+2j+2\leq 4p-1<n/k+3$,  by (\ref{EqforP}).
If $k\geq 7$, then $3p+2+3i=3n/4-4-6i$, then $n/4\leq 4p-1<n/4+3$.
If $k\geq 7$, then $n/4<n/7+3$ is false because $n\geq 34$.
If $k=5$, then $n/4<n/5+3$ is false for $n\geq 60$. The remaining case is $(n,k)=(40,5)$ in which case $p=2$, giving the contradiction $10<4p-1=7$.

Therefore the $2p$ 3-subsets $N_{i_1}$ and $N_{i_2}$ are disjoint, and if $a$ appears in this collection of
3-subsets, $-a$ does not appear in this collection.


Second, let $n\equiv 2\pmod 4$. For $0\leq i\leq p-1$,
define
{\small
\begin{equation}\label{Eq7n=2}
\begin{array}{l}
N_{i_1}=\{3i+3p+1,(3n-2)/4-6i , -[(3n-2)/4+1+3(p-i)]\}\\
N_{i_2}=\{3i+3p+2,(3n-2)/4-2-6i , -[(3n-2)/4+3(p-i)]\}.
\end{array}
\end{equation}
}
By construction, the member sums of $N_{i_1}$ and of  $N_{i_2}$ are zero.

We also note that $3p+1+3i,(3n-2)/4-6i, 3p+2+3i,(3n-2)/4-2-6i \leq 3(n-2)/4+1$  and ($3n-2)/4+1+3(p-i), (3n-2)/4+3(p-i) > 3(n-2)/4+1$,
where $0\leq i\leq p-1$.

Hence, $N_{i_1}$ and $N_{i_2}$ each have two members in $R_1$ and one member in $R_2$.

If $3p+1+3i=(3n-2)/4-6j$, then $(n+2)/4=p+1+i+2j\leq 4p-2<n/k+2$, by  (\ref{EqforP}), which is impossible because $n\geq 34$.

If $3p+2+3i=(3n-2)/4-2-6j$, then $(n+2)/4=p+2+i+2j\leq 4p-1<n/k+3$,  by (\ref{EqforP}).
If $k\geq 7$, then $(n+2)/4<n/7+3$ is false because $n\geq 34$, If $k=5$, then $(n+2)/4<n/5+3$ is false for $n\geq 50$.
(There are no exceptional cases, since $n\geq 34$, $n\equiv 2\pmod4$ and $5|n$ imply $n\geq 50$.)

Therefore the $2p$ 3-subsets $N_{i_1}$ and $N_{i_2}$ are disjoint and if $a$ appears in this collection of
3-subsets, $-a$ does not appear in this collection.

We now prove that no member of $X$ appears more than once in the $4p$ 3-subsets constructed above.
First we prove this claim when $n\equiv 0 \pmod 4$.
Consider the 3-subsets given in (\ref{Eq5}) and (\ref{Eq6n=0}).

Obviously, $3i+1\neq 3p+1+3j $ and $3i+2\neq 3p+2+3j$ because $p\geq 1$.

If $3i+1=(3n/4)-2-6j$, then $n/4=i+2j+1\leq 3p-2<3n/4k+1$,  by (\ref{EqforP}), which is impossible because
$n\geq 34$.

If $3i+2=3n/4-4-6j$, then $n/4=i+2j+2\leq 3p-1<3n/4k+2$,  by (\ref{EqforP}), which is also impossible because
$n\geq 34$.

If $3n/2-2-12i=3n/4-2+3p-3j$, then $n/4=p+4i-j\leq 5p-4<5n/4k+1$, which is impossible because $n\geq 34$.

If $3n/2-7-9i=3n/4-1+3p-3j$, then $n/4=p+3i-j+2\leq 4p-1< n/k+3$, which is impossible because $n\geq 34$.


If $3n/2-1-9i=3n/4-1+3p-3j$, then $n/4=p+3i-j\leq 4p-3<n/k+1$,  by (\ref{EqforP}), which is impossible because $n\geq 34$.

If $3n/2-5-6i=3n/4-2+3p-3j$, then $n/4=2i-j+p+1\leq 3p-1<3n/4k+2$, by (\ref{EqforP}), which is not possible if $n\geq 34$.


Second, we prove the claim above when $n\equiv 2 \pmod 4$.

Consider the 3-subsets given in (\ref{Eq5}) and (\ref{Eq7n=2}).

If $3i+1=(3n-2)/4-6j$, then $(n+2)/4=i+2j+1\leq 3p-2<3n/4k+1$, which is impossible because $n\geq 34$.

If $3i+2=(3n-2)/4-2-6j$, then $(n+2)/4 =i+2j+2\leq 3p-1<3n/4k+2$, which is impossible because $n\geq 34$.

If $3n/2-2-12i=(3n-2)/4+3p-3j$, then $(n+2)/4=p-j+4i+1\leq 5p-3<5n/4k+2$, which is impossible for $n\neq 10(2a+1)$,
where $a$ is a nonnegative integer. If $n=10(2a+1)$, then the inequality $(n+2)/4\leq 5p-3$ does not hold.

If $3n/2-1-9i=(3n-2)/4+1+3(p-j)$, then $(n+2)/4=3i+p-j+1\leq 4p-2<n/k+2$, which is impossible because $n\geq 34$.

If $3n/2-7-9i=(3n-2)/4+1+3p-3j$, then $(n+2)/4=3i+p-j+3\leq 4p < n/k+4$, which is impossible because $n\geq 34$.

If $3n/2-5-6i=(3n-2)/4+3(p-j)$, then $(n+2)/4=p-j+2i+2<3p<3n/4k+3$, which is impossible because $n\geq 34$.

 We are now ready to construct the sets $S_1$ and $S_2$. Recall that for a set of numbers $L$ we defined $L'=\{-a\mid a\in L\}$.
 When $n=kq$, $n=kq+k/3$ or $n=kq+2k/3$ and $q\equiv 2\pmod{4}$
 the desired $S_1$ and $S_2$ are given by:
\begin{equation}\label{Eq8}
\begin{array}{l}
S_1=\{M_{i_1},M_{i_2},M_{i_1}', M_{i_2}'\mid 0\leq i\leq p-1\}\\
S_2=\{N_{i_1},N_{i_2},N_{i_1}', N_{i_2}'\mid 0\leq i\leq p-1\}.
\end{array}
\end{equation}
Then $|S_1|+|S_2|=4p$.
Now if $n=kq$, then $4p\geq q$, if $n=kq+k/3$, then $4p\geq q+1$ because $q$ is odd,
if $n=kq+2k/3$ and $q \equiv 2\pmod{4}$ then $4p=q+2$ and
if $n=kq+2k/3$ and $q \equiv 0\pmod{4}$ then $4p=q$.
We now add two 3-subsets to $S_1$ and two 3-subsets to $S_2$ to obtain $|S_1|=|S_2|=q+2$.
Define
$${\bar M_{i_1}}=M_{i_1}\cup\{\{6p+1, \frac{3n-2}{4}+3p+3 , -(\frac{3n-2}{4}+9p+4) \}\},$$
$${\bar N_{i_1}}=N_{i_1}\cup\{\{9p+1, \frac{3n-2}{4}-6p+3, -(\frac{3n-2}{4}+3p+4) \}\}$$
and

\begin{equation}\label{Eq8.1}
\begin{array}{l}
S_1=\{{\bar M_{i_1}},M_{i_2},{\bar M_{i_1}'}, M_{i_2}'\mid 0\leq i\leq p-1\}\\
S_2=\{{\bar N_{i_1}},N_{i_1},{\bar N_{i_1}'}, N_{i_2}'\mid 0\leq i\leq p-1\}.
\end{array}
\end{equation}
It is straightforward to see that $S_1$ and $S_2$ satisfy the required conditions.
\end{proof}

\begin{lemma}\label{Lem:S_3}
 Let $n>k\geq 5$ with $k$  odd and $n$ even such that $k| 3n$. Let $n=kq+r$, where $0\leq r<k$, and
$$R_3=\{\pm3i\mid 1\leq i\leq n/2\}.$$
Then there is a set $S_3$ of disjoint {\rm 3}-subsets of $R_3$ such that
\begin{enumerate}
\item the member sum of each {\rm 3}-subset is zero;
\item if $\{a,b,c\}\in S_3$, then $\{-a,-b,-c\}\in S_3$;
\item $|S_3|\geq q, q+1, q+2$ if $r=0, k/3, 2k/3$, respectively.
\end{enumerate}
\end{lemma}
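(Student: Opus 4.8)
The plan is to exploit the fact that, unlike $R_1$ and $R_2$, the set $R_3=\{\pm 3i\mid 1\le i\le n/2\}$ is simply $3$ times the symmetric interval $\{\pm 1,\pm 2,\dots,\pm n/2\}$. Dividing by $3$, a zero-sum $3$-subset of $R_3$ of the shape $\{3a,3b,-3c\}$ corresponds exactly to distinct positive integers $a,b,c\le n/2$ with $a+b=c$, and its negative $\{-3a,-3b,3c\}$ then also lies in $R_3$. So it suffices to find $t$ pairwise disjoint \emph{sum triples} $\{a_i,b_i,c_i\}$, $a_i+b_i=c_i$, inside $\{1,\dots,n/2\}$ --- that is, with all $3t$ values distinct and at most $n/2$ --- where $2t$ is the lower bound demanded for $|S_3|$; then $S_3:=\{\{3a_i,3b_i,-3c_i\},\{-3a_i,-3b_i,3c_i\}\mid 1\le i\le t\}$ automatically meets conditions 1--3 and has $|S_3|=2t$.

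First I would observe that the target size ($q$, $q+1$, or $q+2$ according as $r=0$, $k/3$, $2k/3$) is always even: since $n$ is even and $k$ is odd, $n=kq+r$ forces $q$ even when $r=0$ or $r=2k/3$ and $q$ odd when $r=k/3$. Writing the target as $2t$, we get a positive integer $t$ (with $q\ge 2$ in the case $r=0$, since $n>k$). Next I would take the explicit family
$$T_i=\{\,t-1+i,\ \ 2t-1+i,\ \ 3t-2+2i\,\}\qquad(1\le i\le t),$$
noting that $(t-1+i)+(2t-1+i)=3t-2+2i$. Here the first entries fill $\{t,\dots,2t-1\}$, the second fill $\{2t,\dots,3t-1\}$, and the third lie in $\{3t,3t+2,\dots,5t-2\}$; these three ranges are pairwise disjoint, so all $3t$ values are distinct, and the largest one is $5t-2$. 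Hence the construction succeeds as soon as $5t-2\le n/2$, i.e. $10t\le n+4$.

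Checking $10t\le n+4$ is the only step requiring (routine) case work. If $r=0$ then $t=q/2$, $n=kq\ge 5q$, and $10t=5q<5q+4\le n+4$. If $r=k/3$ then $3\mid k$, so $k\ge 9$, $t=(q+1)/2$, $n=k(3q+1)/3\ge 9q+3$, and $10t=5q+5\le 9q+7\le n+4$. If $r=2k/3$ then again $k\ge 9$, $t=(q+2)/2$, $n=k(3q+2)/3\ge 9q+6$, and $10t=5q+10\le 9q+10\le n+4$. Thus $S_3$ exists for every admissible $(n,k)$, and the lemma follows. I expect no genuine obstacle here --- this lemma is much easier than Lemma \ref{Lem:S_1.S_2} precisely because $R_3$ is an arithmetic progression; the one small trick is to place the three members of each sum triple in three disjoint blocks, which eliminates all collisions simultaneously.
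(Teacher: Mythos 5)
Your proposal is correct, and it takes a genuinely different (and cleaner) route than the paper. Both proofs ultimately rest on the same reduction --- after dividing by $3$, a zero-sum triple in $R_3$ together with its negative corresponds to a Schur-type sum triple $a+b=c$ in $\{1,\dots,n/2\}$ --- but the constructions diverge from there. The paper first disposes of all $n\le 30$ by explicit lists, and for $n>30$ builds two separate families $T_{i_1}$, $T_{i_2}$ governed by the parameter $\alpha=\lfloor (n-8)/12\rfloor$, keeping the entries disjoint by segregating odd and even values; this produces roughly $n/4$ triples, far more than the $q+2$ ever needed, at the cost of some bookkeeping. You instead parameterize directly by the exact number $t$ of positive triples required, place the three coordinates of $T_i=\{t-1+i,\,2t-1+i,\,3t-2+2i\}$ in three disjoint consecutive blocks so that distinctness is immediate, and reduce everything to the single inequality $10t\le n+4$, which your case check (using $k\ge 5$ when $r=0$ and $k\ge 9$ when $3\mid k$ forces it) verifies with room to spare. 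Your parity argument showing the target $q$, $q+1$, $q+2$ is always even is also correct in each case, as are the bounds $q\ge 2$ (for $r=0$ and $r=2k/3$) and $q\ge 1$ (for $r=k/3$) guaranteeing $t\ge 1$. The payoff of your version is uniformity: no small-case table is needed, and the verification is a three-line inequality check rather than a range analysis of two interleaved families. I see no gap.
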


\begin{proof}
If $n=2k$ or $(n,k)\in\{(12,9),(20,15),(28,21)\}$ define $S_3=\{\{3,6,-9\}, \{-3,-6,9\}\}$.

For $(n,k)\in \{(20,5),(24, 9), (28,7), (30,9)\}$ define
$S_3=\{\{3,27,$ $-30\}$, $\{-3,-27,30\}$, $\{6,12,-18\}$, $\{-6,-12,18\}\}$.

For $(n,k)=(30,5)$ define
$S_3=\{\{3,42,-45\}$, $\{6,33,-39\}$, $\{9,21,-30\}$, $\{-3,-42,45\}$, $\{-6,-33,39\}$, $\{-9,-21,30\}\}$.
Hence, the statement is true for $n\leq 30$.
For $n>30$ we proceed as follows:
Let $\alpha=\lfloor(n-8)/12\rfloor$.
Define
$$T_{i_1}=\{3+6i, 6\alpha+9+6i, -(6\alpha+12+12i)\},$$
for $0\leq i\leq \alpha$. Since
$$ 12+6\alpha+12i\leq 12+18\alpha\leq 12+18(n-8)/12=3n/2,$$
it follows that $T_{i_1}, T_{i_1}'\subseteq R_3$.

Now define
$$T_{i_2}=\{12\alpha+15+6i,6\alpha-6-12i , -(18\alpha+9-6i)\},$$
for $0\leq i \leq \lfloor(\alpha-2)/2\rfloor$. See Figure \ref{3-subsestinR3} for illustration.
Note that $6\alpha-6-12\lfloor(\alpha-2)/2\rfloor\geq 6$ and
$18\alpha+9\leq 3n/2$. Hence, $T_{i_2}, T_{i_2}'\subseteq R_3$. In addition, the absolute value of the odd numbers
used in $T_{i_1}$ are $\{3, 9,\ldots,12\alpha+9\}$ and used in $T_{i_2}$ are $\{12\alpha+15, 12\alpha+21,\ldots,18\alpha+9\}$. The absolute value of the even numbers used in $T_{i_1}$ are $\{6\alpha+12+12i\mid 0\leq i\leq \alpha\}$ and used in $T_{i_2}$ are $\{6\alpha-6-12i\mid 0\leq i\leq\lfloor (\alpha-2)/2\rfloor\}$.
Hence, the numbers used in $T_{i_1}$ and $T_{i_2}$ are all different.
Define
$$S_3=\{T_{i_1},T_{i_1}'\mid 0\leq i\leq \alpha\}\cup
 \{T_{i_2},T_{i_2}'\mid 0\leq i\leq \lfloor (\alpha-2)/2\rfloor\}.$$
It is straightforward to confirm that $|S_3|\geq q, q+1, q+2$ if $r=0, k/3, 2k/3$, respectively.
\end{proof}

\begin{figure}[ht]
$$\begin{array}{cc}
\begin{array}{|c|c|}\hline
\{3,45,-48\}&\{-3,-45,48\}\\ \hline
\{9,51,-60\}&\{-9,-51,60\} \\\hline
\{15,57,-72\}&\{-15,-57,72\}\\ \hline
\{21,63,-84\}&\{-21,-63,84\}\\ \hline
\{27,69,-96\}&\{-27,-69,96\}\\ \hline
\{33,75,-108\}&\{-33,-75,108\}\\ \hline
\{39,81,-120\}&\{-39,-81,120\}\\ \hline
\end{array}&
\begin{array}{|c|}\hline
\{87,30,-117\}\\ \hline
\{93,18,-111\}\\ \hline
\{99,6,-105\}\\ \hline
 \{-87,-30,117\}\\ \hline
 \{-93,-18,111\} \\ \hline
\{-99,-6,105\}\\ \hline
\end{array}\\
L_{i_1} \mbox{ and } L_{i_1}' & L_{i_2} \mbox{ and } L_{i_2}'\\
\end{array}$$
\caption{\mbox {20 disjoint {\rm 3}-subsets in } $R_3$ \mbox { when } $(n,k)=(90,5)$}
\label{3-subsestinR3}
\end{figure}

\begin{proposition}\label{n even and k odd}
Let $k$ be odd and $n$ be even integers, $k\geq 3$ and $k|3n$. Then there exists an $SMR(3n/k,n;k,3)$.
\end{proposition}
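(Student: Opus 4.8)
The plan is to split into the two cases $k=3$ and $k\geq 5$, exactly as the machinery of the previous sections dictates. When $k=3$ the condition $mk=3n$ forces $m=n$, so we need an $SMR(n,n;3,3)$, i.e. a signed magic square $SMS(n;3)$ with three filled cells per row and column; since $n$ is even and $n\geq 4$ (as $3\leq m,k\leq n$ and $n$ even), existence follows immediately from Theorem~\ref{TH:KSW2}. So from now on assume $k\geq 5$, and set $m=3n/k$; note $m$ is even because $n$ is even and $k$ is odd, and $m\geq 3$ by hypothesis, so in fact $m\geq 4$.

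The main step is to invoke Theorem~\ref{mainconstruction} with the $SMR(3,n)$ constructed in Lemma~\ref{3xeven} as the base array $A$, whose columns form the partition ${\cal P}_1$ of $X=\{\pm1,\ldots,\pm 3n/2\}$. What remains is to build a partition ${\cal P}_2=\{D_1,\ldots,D_\ell\}$ of $X$ with $\ell=3n/k=m$, each $|D_i|=k$, each $D_i$ summing to zero, and ${\cal P}_1,{\cal P}_2$ near orthogonal. By Remark~\ref{rmk}, writing $n=kq+r$ we have $r\in\{0,k/3,2k/3\}$, so $\ell\in\{3q,3q+1,3q+2\}$. The idea is to assemble each $D_i$ from the $3$-subsets furnished by Lemmas~\ref{Lem:S_1.S_2} and~\ref{Lem:S_3}: the sets $S_1,S_2$ supply zero-sum $3$-subsets that straddle rows $R_1$ and $R_2$ of $A$ (with no two members of a triple in a common column of $A$), and $S_3$ supplies zero-sum $3$-subsets inside $R_3$; moreover all three families are closed under negation and, by their disjointness and the part~4/item~3 size guarantees, there are at least $q$ (resp.\ $q+1$, $q+2$) triples of each relevant type. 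Each $D_i$ will be a union of exactly $k/3$ such $3$-subsets — for this to be legitimate we need $3\mid k$, which is exactly the standing hypothesis $3\mid m$ or $3\mid k$ specialized to the present case: since $k\geq 5$ is odd and $mk=3n$ with $m$ even, and we are in the branch where $3\mid k$ (the $3\mid m$ branch with $n$ even is handled separately, or one checks $3\mid k$ follows here from $m=3n/k$ being an integer together with the parities), we may group the $3$-subsets in blocks of $k/3$. Concretely, one forms $q$ (or $q+1$, $q+2$) sets $D_i$ each consisting of $k/3$ triples drawn from $S_1\cup S_2\cup S_3$ in a negation-symmetric way so that each $D_i$ is a zero-sum $k$-subset; since every constituent triple has its three members in distinct columns of $A$, no two elements of $D_i$ share a column, giving near orthogonality. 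Finally one checks these $D_i$ exhaust $X$: the remaining elements of $X$ not used by the $S_j$'s lie in full rows $R_1,R_2$ or $R_3$ minus the used triples, and by the parity/size bookkeeping in the lemmas these leftovers can be packaged into the remaining $D_i$'s as unions of $k/3$ negation-paired triples (inside a single row, hence automatically within the column-disjointness requirement), exactly as in the three cases of Proposition~\ref{n and k even}.

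With ${\cal P}_2$ in hand, Theorem~\ref{mainconstruction} yields an $SMR(3n/k,n;k,3)=SMR(m,n;k,3)$, completing the case $k\geq 5$. Combined with the $k=3$ case above, and recalling from Propositions~\ref{nodd3divk}, \ref{nodd3divm}, and~\ref{n and k even} that the remaining parity/divisibility regimes are already settled, we obtain the full characterization: an $SMR(m,n;k,3)$ exists if and only if $3\leq m,k\leq n$ and $mk=3n$. The necessity of these conditions is immediate from the definition ($mk=ns$ with $s=3$, $s\leq m$, $r=k\leq n$, and $k,m\geq 3$ since a column with $3$ filled cells forces $m\geq 3$ and the row constraint forces $k\geq 3$ once $n\geq 2$).

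The hard part is the bookkeeping that the $3$-subsets from $S_1,S_2,S_3$ together with the leftover row-fragments really do partition $X$ and really do group into zero-sum $k$-subsets with $k/3$ triples apiece: one must verify that the "at least $q$" counts in the lemmas can be trimmed to exactly the right totals so that exactly $\ell-($ number of all-in-one-row blocks$)$ of the $D_i$ use the cross-row triples, that the negation symmetry is respected block-by-block, and that nothing is double-counted between $R_3$ and the $R_1/R_2$ construction (which is automatic since those live in disjoint parts of $X$). Everything else is a direct application of the already-established Theorems~\ref{TH:KSW2} and~\ref{mainconstruction}.
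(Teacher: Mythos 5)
Your overall framework (split off $k=3$, then for $k\geq 5$ feed the $SMR(3,n)$ of Lemma~\ref{3xeven} and a suitable partition ${\cal P}_2$ into Theorem~\ref{mainconstruction}, with the subcases $r=0,k/3,2k/3$) is the same as the paper's, but the way you assemble the $k$-subsets $D_i$ has a genuine gap. You build each $D_i$ as a union of $k/3$ zero-sum triples drawn from $S_1\cup S_2\cup S_3$, and you assert that $3\mid k$ in this branch. That is false in general: the hypotheses are only that $k$ is odd, $n$ is even and $k\mid 3n$, and this proposition must cover, e.g., $(n,k)=(10,5)$, $(14,7)$, $(22,11)$, where $3\nmid k$ (and instead $3\mid m$). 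There is no separate treatment of ``$n$ even and $3\mid m$'' elsewhere --- Sections~\ref{SEC 2} and~\ref{SEC 3} handle only $n$ odd --- so your grouping-into-triples scheme simply does not apply to the generic case $r=0$ with $3\nmid k$. (When $r=k/3$ or $2k/3$ one does get $3\mid k$, but that does not rescue the main case.) Moreover, even when $3\mid k$, packing all $\ell$ of the $D_i$ with $k/3$ triples apiece would require on the order of $kq\approx n$ disjoint zero-sum triples, while Lemmas~\ref{Lem:S_1.S_2} and~\ref{Lem:S_3} only guarantee roughly $q$ triples in each of $S_1,S_2,S_3$; and your fallback of packaging the leftover row elements into ``negation-paired triples inside a single row'' cannot work, since a nonempty set closed under negation has even cardinality and no supply of zero-sum triples inside $R_1$ or $R_2$ is provided by the lemmas.

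The missing idea is the paper's padding mechanism: each generic $D_i$ consists of exactly \emph{one} triple from $S_1\cup S_2\cup S_3$ together with a $(k-3)$-subset of leftover elements taken from a \emph{single} row of $A$ and closed under negation --- this only needs $k-3$ even, i.e.\ $k$ odd, with no divisibility by $3$. Because each column of $A$ meets each row in one cell, any subset of a single row is automatically column-disjoint, so near orthogonality reduces to one collision to avoid: the element $d_i$ of $R_1$ sitting in the same column as the $R_2$-member $a_i$ of the chosen triple must be excluded from the $i$th $(k-3)$-subset (and symmetrically for the triples of $S_1$ against $L_2$). Without this step, and the analogous assembly of the exceptional blocks $D_{\ell}$ (and $D_{\ell-1}$) from three $k/3$-pieces, one from each row, when $r\neq 0$, the partition ${\cal P}_2$ cannot be completed and Theorem~\ref{mainconstruction} cannot be invoked.
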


\begin{proof}
In \cite{KSW} it is proved that there is an $SMR(n,n;3,3)$ for $n\geq 3$. Hence, the statement is true
for $k=3$. Now let $k\geq 5$.
Let $A$ be the $SMR(3,n)$ constructed in the proof of
Lemma \ref{3xeven} with entries $X=\{\pm 1, \pm 2,\ldots,\pm 3n/2\}$.
Let ${\cal P}_1=\{C_1,C_2,$ $\ldots,C_n\}$, where
$C_i$'s are the columns of $A$. Obviously, ${\cal P}_1$ is a partition of $X$.
As in the proof of Proposition \ref{n and k even}, we construct a partition
${\cal P}_2=\{D_1,D_2,\ldots,D_{\ell}\}$ of $X$, where  $\ell=3n/k$,
such that $|D_i|=k$, the sum of members in each $D_i$ is zero for $1\leq i\leq \ell$, and  ${\cal P}_1$ and ${\cal P}_2$
are near orthogonal. Then by Theorem \ref{mainconstruction}, the result follows.

Since $n=kq+r$, where $0\leq r<k$, it follows that $r=0$, $r=k/3$ or $r=2k/3$ by Remark \ref{rmk}.
We consider 3 cases.
\vspace{5mm}

\noindent {\bf Case 1: $r=0$.}\quad So $n=kq$. Since $k$ is odd and $n$ is even, it follows that $q$ is even.
By Lemma \ref{Lem:S_1.S_2},
there are sets $S_1$ and $S_2$ each consisting of $q$ 3-subsets of $R_1\cup R_2$ having the properties given in this lemma.
Consider the set $L_1=\{x\in R_1\mid x \mbox{ is not in any 3-subset of } S_1\cup S_2\}$. Then $|L_1|=n-3q=(k-3)q$.
Note that $k-3$ is even and the members of $L_1$ are of the form $\pm x$ for some $x\in R_1$.
Now we assign each 3-subset of $S_2$ to a $(k-3)$-subset of $L_1$ as follows: Let $\{a_i,b_i,c_i\}\in S_2$ with $a_i\in R_2$ and $b_i,c_i\in R_1$, where $1\leq i\leq q$.
Let $d_i$ be the member of $R_1$ which is in the same column as of $a_i$.
We partition the set $L_1$ into $q$ $(k-3)$-subsets of $L_1$ such that if $x$ is in a $(k-3)$-subset, then $-x$ is also in this subset. In addition,
the $i$th $(k-3)$-subset misses $d_i$. It is obvious that a $d_i$ can be in at most one of the $q$ $(k-3)$-subsets.
Let $D_i$ be the union of the $i$th $(k-3)$-subset and $\{a_i,b_i,c_i\}$, where $1\leq i\leq q$. Note that, by construction,
the $k$-subsets $D_i$ are disjoint, the member sums of $D_i$ are zero and intersect each column of $A$ in at most one member for $1\leq i\leq q$.

Now consider $L_2=\{x\in R_2\mid x \mbox{ is not in any {\rm 3}-subset of } S_1\cup S_2\}$ and the 3-subsets in $S_1$. By a similar method
described above, we find $k$-subsets $D_{q+1},D_{q+2},\ldots,D_{2q}$ which are disjoint, the member sums of $D_i$ are zero and
intersect each column of $A$ in at most one member for $q+1\leq i\leq 2q$.
In addition, the set $\{D_1,D_2,\ldots, D_{2q}\}$ partitions $R_1\cup R_2$.

Finally, consider the 3-subsets in $S_3$ given in Lemma \ref{Lem:S_3} and
the subset $L_3$ of $R_3$ which consists of members of $R_3$ which are not in any member of $S_3$.
Then $L_3$ can be partitioned into $q$ subsets of size $k-3$, say $Q_i$, where $1\leq i\leq q$, such that if $x\in Q_i$, then $-x\in Q_i$. Pair $q$ members of $S_3$ with $Q_i$'s to obtain $k$-subset $D_i$ for $2q+1\leq i\leq 3q$.
Then the sum of members of each $D_i$ is zero. In addition, $\{D_i\mid 2q+1\leq i\leq 3q\}$ is a partition of $R_3$. Hence, it has at most one member in common with each column of $A$.

Define ${\cal P}_2=\{D_1,D_2,\ldots,D_{\ell}\}$, where  $\ell=3n/k=3q$. Then
${\cal P}_2$ is a partition of $X$ and is near orthogonal to the partition
${\cal P}_1=\{C_1,C_2,\ldots,C_n\}$.
So by Theorem \ref{mainconstruction}, there exists an $SMR(3n/k,n;k,3)$.


\vspace{5mm}

\noindent {\bf Case 2: $r=k/3$.}\quad So $n=kq+k/3$. Since $n$ is even and $k$ is odd, it follows that $q$ is odd.
Figure \ref {4,12;9,3} displays an $SMR(4,12; 9, 3)$. In what follows, we assume $(n,k)\neq(12,9)$.
This case does not follow the construction given below.
By Lemma \ref{Lem:S_1.S_2},
there are sets $S_1$ and $S_2$, each containing $q+1$ 3-subsets of $R_1\cup R_2$ having the properties given in this lemma.
In what follows, we partition $R_1\cup R_2$ into $2q$ $k$-subsets and two $k/3$-subset such that the member sum of each subset is zero.

Consider the set $$L_1=\{x\in R_1\mid x \mbox{ is not in any {\rm 3}-subsets of } S_1\cup S_2\}$$ and
$$L_2=\{x\in R_2\mid x \mbox{ is not in any {\rm 3}-subset of } S_1\cup S_2\}.$$

Also consider the 3-subsets in $S_3$ given in Lemma \ref{Lem:S_3} and
the subset $L_3$ of $R_3$ which consists of members of $R_3$ which are not in any member of $S_3$.
Then
$$|L_1|=|L_2|= |L_3|=n-3(q+1)=(k-3)q+(k/3-3).$$


It is easy to see that there exist 3-subsets $E_i$ in $S_i$, $1\leq i\leq 3$, such that the subset
$E_1\cup E_2\cup E_3$ intersect each column of $A$ in at most one member.
Using the method described in Case 1 for construction of $(k-3)$-subsets of $L_1$ we construct $(k/3-3)$-subsets ${E'}_i$ of $L_i$ for $i=1,2,3$ such that if $x\in {E'}_i$, then $-x\in {E'}_i$.
In addition, no two members of
$$D_{\ell}=\bigcup_{i=1}^3 ({E'}_i\cup E_i), \mbox { where } \ell=3q+1 $$
are in the same column of $A$. By construction
the member sum of $D_{\ell}$ is zero. Note that $(k/3-3)\geq 0$ because $k\nmid n$ and $k$ is odd.

Consider the set $F_i=L_i\setminus {E'}_i$, $i=1,2,3$. Note that $|F_i|=(k-3)q$ which is even and the members of $F_i$ can be paired as $x,-x$ for some $x\in F_i$.
Now we pair the $q$ $(k-3)$-subsets of $F_1$ with members of $S_2\setminus\{ E_2\}$ to obtain $k$-subsets, say $D_i^1$, $1\leq i\leq q$ as described in Case 1.
We also pair the $q$ $(k-3)$-subsets of $F_2$ with members of $S_1\setminus\{E_1\}$ to obtain $k$-subsets, say $D_i^2$,
$1\leq i\leq q$.

Finally, we pair the $q$ $(k-3)$-subsets of $L_3$ with members of $S_3\setminus \{E_3\}$, to obtain $k$-subsets, say $D_i^3$, $1\leq i\leq q$. Note that the member sum of each of $k$-subset is zero.
By construction,
$${\cal P}_2=\{D_1^i,D_2^i,\ldots,D_q^i, D_{\ell}\mid 1\leq i\leq 3\}$$
is a partition of $X$  and is near orthogonal to the partition
${\cal P}_1=\{C_1,C_2,\ldots,C_n\}$.
So by Theorem \ref{mainconstruction}, there exists an $SMR(3n/k,n;k,3)$.
See Example \ref {example 19} for illustration.

\vspace{5mm}


\noindent {\bf Case 3: $r=2k/3$.}\quad So $n=kq+2k/3$. Since $n$ and $2k/3$ are even and $k$ is odd,
it follows that $q$ is even.
By Lemma \ref{Lem:S_1.S_2},
there are sets $S_1$ and $S_2$ each containing $q+2$ 3-subsets of $R_1\cup R_2$ having the properties given in this lemma.
We partition $R_1\cup R_2$ into $2q$ $k$-subsets and four $k/3$-subset as follows:
Let $L_1,L_2$ and $L_3$ be as defined in Case 2. Then
$$|L_1|=|L_2|= |L_3|=n-3(q+2)=(k-3)q+2(k/3-3).$$

It is easy to find two disjoint sets of 3-subsets $E_{i}^1$ and $E_{i}^2$ in $S_i$, $1\leq i\leq 3$, such that the subset
$E_{1}^1\cup E_{2}^1\cup E_{3}^1$ and $E_{1}^2\cup E_{2}^2\cup E_{3}^2$ intersect each column of $A$ in at most one member.

Now we construct two disjoint sets of $(k/3-3$)-subset ${E'}_{i}^1$ and ${E'}_{i}^2$
of $L_i$ for $i=1,2,3$ such that if $x\in {E'}_{i}^1$ or $x\in {E'}_{i}^2$, then
$-x\in {E'}_{i}^1$ or $-x\in {E'}_{i}^2$, respectively.
In addition, no two members of
$$D_{\ell-1}=\bigcup_{i=1}^3 ({E'}_{i}^1\cup E_{i}^1)
\mbox{ and }
D_{\ell}=\bigcup_{i=1}^3 ({E'}_{i}^2\cup E_{i}^2),
\mbox{ where }\ell=3q+2,
$$ are in the same column of $A$.
By construction,
the member sums  of $D_{\ell-1}$ and of $D_{\ell}$ are both zero.

Now we construct two $(k/3-3)$-subsets $E_{i}^1$ and $E_{i}^2$ of $L_i$ for $i=1,2,3$ such that if $x\in E_{i}^1$ or $x\in E_{i}^2$ ,
then $-x\in E_{i}^1$ or $-x\in E_{i}^2$, respectively. In addition, no two members of $E_{1}^1\cup E_{2}^1\cup E_{3}^1$ or
of $E_{1}^2\cup E_{2}^2\cup E_{3}^2$  are in the same column of $A$.

Consider the set $F_i=L_i\setminus ({E'}_{i}^1\cup {E'}_{i}^2)$, $i=1,2,3$. Note that $|F_i|=(k-3)q$, which is even and the members of $F_i$ can be paired as $x,-x$ for some $x\in F_i$.

Now we pair the $q$ $(k-3)$-subsets of $F_1$ with members of $S_2\setminus\{ E_{2}^1, E_{2}^2\}$ to obtain $k$-subsets, say $D_i^1$, $1\leq i\leq q$ as described in Case 1.
We also pair the $q$ $(k-3)$-subsets of $F_2$ with members of $S_1\setminus\{E_{1}^1, E_{1}^2\}$ to obtain $k$-subsets, say $D_i^2$, $1\leq i\leq q$.

Finally, we pair the $q$ $(k-3)$-subsets of $L_3$ with members of $S_3\setminus \{E_{3}^1, E_{3}^2\}$, to obtain $k$-subsets, say $D_i^3$, $1\leq i\leq q$. Note that the member sum of each of $k$-subset is zero.

By construction,
$${\cal P}_2=\{D_1^i,D_2^i,\ldots,D_q^i, D_{\ell-1}, D_{\ell}\mid 1\leq i\leq 3\}$$
is a partition of $X$  and is near orthogonal to the partition
${\cal P}_1=\{C_1,C_2,\ldots,C_n\}$.
So by Theorem \ref{mainconstruction}, there exists an $SMR(3n/k,n;k,3)$.
\end{proof}

\begin{example}\label {example 19}
In order to illustrate the construction given in Proposition \ref{n even and k odd}, Case 2 let consider the case $n=30$ and $k=9$. Then
$$\begin{array}{ll}
S_1=\{\{1,43,-44\},\{-1,-43,44\},\{2,38,-40\},\{-2,-38,40\}\};\\
S_2=\{\{4,22,-26\}, \{-4,-22,26\},\{5,20,-25\}, \{-5,-20,25\}\};\\
S_3=\{\{3,15,-18\},\{-3,-15,18\},\{9,21,-30\},\{-9,-21,30\}\}.\\
\end{array}$$

So the unused members of $R_1$, $R_2$ and $R_3$, respectively, are
$$\begin{array}{ll}
L_1=\{\pm7,\pm8,\pm10,\pm11,\pm13,\pm14,\pm16,\pm17,\pm19\};\\
L_2=\{\pm23,\pm28,\pm29,\pm31,\pm32,\pm34,\pm35,\pm37,\pm41\};\\
L_3=\{\pm6,\pm12,\pm24,\pm27,\pm33,\pm36,\pm39,\pm42,\pm45\}.
\end{array}$$

\noindent Note that since $k=9$, then $k/3-3=0$. So $E_1=E_2=E_3=\emptyset.$ Define
$$\begin{array}{ll}
D_1^1=\{7,-7,8,-8,10,-10\}\cup \{-5,-20,25\}      \\
D_1^2=\{11,-11,13,-13,14,-14 \}\cup \{-4,-22,26\}    \\
D_1^3=\{ 16,-16,17,-17,19,-19\}\cup \{5,20,-25\} \\
\end{array}$$

$$\begin{array}{ll}
D_2^1=\{23,-23,28,-28,29,-29\}\cup \{-1,-43,44\}\  \\
D_2^2=\{ 31,-31,32,-32,34,-34\}\cup \{-2,-38,40\} \\
D_2^3=\{35,-35,37,-37,41,-41\}\cup \{2,38,-40\} \\
D_1^3=\{6,-6,12,-12,24,-24\}\cup \{3,15,-18\}    \\
D_2^3=\{27,-27,33,-33,36,-36\}\cup\{9,21,-30\} \\
D_3^3=\{39,-39,42,-42,45,-45\}\cup\{-9,-21,30\}  \\
D_{10}=\{1,43,-44,4,22,-26,-3,-15,18\}.\\
\end{array}$$

It is straightforward to see that
$${\cal P}_2=\{D_1^i,D_2^i,D_3^i, D_{10}\mid 1\leq i\leq 3\}$$
is a partition of $X=\{\pm1.\pm2,\ldots,\pm45\}$  and is near orthogonal to the partition
${\cal P}_1=\{C_1,C_2,\ldots,C_{30}\}$.
So by Theorem \ref{mainconstruction}, there exists an $SMR(10,30;9,3)$.
\end{example}

\begin{figure}[ht]
$$\begin{array}{|c|c|c|c|c|c|c|c|c|c|c|c|}\hline
1&16&-17&-12&12&&&-6&6&-3&3&\\\hline
17&-1&&&-16&13&5&-5&-13&&8&-8\\\hline
&&2&-2&4&-9&9&&7&10&-11&-10\\\hline
-18&-15&15&14&&-4&-14&11&&-7&&18\\\hline
\end{array}$$
\caption{An $SMR(4,12;9,3)$}
		\label{4,12;9,3}
\end{figure}


By Propositions \ref{nodd3divk}, \ref{nodd3divm}, \ref{n and k even} and \ref{n even and k odd} we achieve the main theorem of this paper.

\begin{maintheorem}
Let $m,n,k$ be positive integers and $3\leq m,k\leq n$. Then there exists an $SMR(m,n;k,3)$
if and only if $mk=3n$.
\end{maintheorem}

\newpage

\begin{figure}[ht]
{\tiny
$$\begin{array}{|c|l|}\hline
(n,k) & \mbox {{\rm 3}-subsets of } R_1\cup R_2 \\ \hline
(10,5)  &M_1=\{1, 13, -14\},    M_1'=\{-1, -13, 14\},\\
       &  N_1=\{4, 7, -11\},     N_1'=\{-4, -7, 11\} \\ \hline
(12, 9) & M_1=\{1,16,-17\},     M_1'=\{-1,-16,17\} \\
        & N_1=\{4, 7, -11\},    N_1'=\{-4, -7, 11\} \\ \hline
(14,7) &  M_1=\{1, 19, -20\},   M_1'=\{-1, -19, 20\} \\
       &  N_1=\{4, 10, -14\},   N_1'=\{-4, -10, 14\} \\ \hline
       (18,9) &  M_1=\{1, 25, -26\},   M_1'=\{-1, -25, 26\} \\
       &  N_1=\{4, 13, -17\},   N_1'=\{-4, -13, 17\} \\ \hline
(20,5) &  M_1=\{1, 28, -29\},   M_1'=\{-1, -28, 29\} \\
       &  M_2=\{2, 23, -25\},   M_2'=\{-2, -23, 25\} \\
       &  N_1=\{4, 13, -17\},   N_1'=\{-4, -13, 17\} \\
       &  N_2=\{5, 11, -16\},   N_2'=\{-5, -11,16\} \\ \hline
(20,15) &  M_1=\{1, 28,-29\},   M_1'=\{-1,-28,29 \} \\
       &  N_1=\{4,13,-17\},   N_1'=\{-4,-13,17\} \\ \hline
(22,11) &  M_1=\{1, 31, -32\},   M_1'=\{-1, -31, 32\} \\
        &  N_1=\{4, 16, -20\},   N_1'=\{-4, -16, 20\} \\ \hline
(24,9) &  M_1=\{1, 34, -35\},   M_1'=\{-1, -34, 35\} \\
       &  M_2=\{2, 29, -31\},   M_2'=\{-2, -29, 31\} \\
       &  N_1=\{4, 16, -20\},   N_1'=\{-4, -16, 20\} \\
       &  N_2=\{5, 14, -19\},   N_2'=\{-5, -14,19\} \\ \hline
(26,13)&  M_1=\{1, 37, -38\},   M_1'=\{-1, -37, 38\} \\
       &  N_1=\{4, 19, -23\},   N_1'=\{-4, -19, 23\} \\ \hline
(28,7) &  M_1=\{1, 40, -41\},   M_1'=\{-1, -40, 41\} \\
       &  M_2=\{2, 35, -37\},   M_2'=\{-2, -35, 37\} \\
       &  N_1=\{4, 19, -23\},   N_1'=\{-4, -19, 23\} \\
       &  N_2=\{5, 17, -22\},   N_2'=\{-5, -17, 22\} \\ \hline
(28,21)&  M_1=\{1, 40, -41\},   M_1'=\{-1,-40,41\} \\
       &  N_1=\{4,19,-23\},     N_1'=\{-4,-19,23\} \\ \hline
(30,5) &  M_1=\{1, 43, -44\},   M_1'=\{-1, -43,  44\}\\
       &  M_2=\{2, 38, -40\},   M_2'=\{-2, -38, 40\}\\
       &  M_3=\{4, 31, -35\},   M_3'=\{-4, -31,  35\}\\
       &  N_1=\{7, 22, -29\},   N_1'=\{-7,-22, 29\}\\
       &  N_2=\{10, 16, -26\},  N_2'=\{-10, -16, 26\}\\
       & N_3=\{8, 20, -28\},    N_3'=\{-8, -20, 28\}\\ \hline
(30,9) &  M_1=\{1,43,-44\},     M_1'=\{-1,-43,44\}\\
       &  M_2=\{2,38,-40\},     M_2'=\{-2,-38,40\}\\
       &  N_1=\{4,22,-26\},     N_1'=\{-4,-22,26\}\\
       &  N_2=\{5,20,-25\},     N_2'=\{-5,-20,25\}\\ \hline
(30,15)&  M_1=\{1, 43, -44\},   M_1'=\{-1, -43,  44\}\\
      &  N_1=\{4, 22, -26\},    N_1'=\{-4,-22, 26\}\\ \hline
\end{array}$$
\caption{Small cases for Lemma \ref {Lem:S_1.S_2}}
\label{small cases}
}
\end{figure}

\noindent {\bf Acknowledgement:} The authors would like to thank the referees for their useful
suggestions and comments.

\newpage



\end{document}